\theoremstyle{plain}
\newtheorem{theorem}{Theorem}[section]
\newtheorem{corollary}[theorem]{Corollary}
\theoremstyle{definition}
\theoremstyle{remark}
\begin{document}

\title[On higher order Horadam 3-parameter generalized quaternions]
{On higher order Horadam 3-parameter generalized quaternions}

\author[G. Morales]{Gamaliel Morales}
\address{Gamaliel Morales \\ Instituto de Matem\'aticas \\ Pontificia Universidad Cat\'olica de Valpara\'iso \\ Blanco Viel 596, Chile}
\email{gamaliel.cerda.m@mail.pucv.cl}


\subjclass[2020]{Primary 11R52, 11B37, 11B39.}
\keywords{generalized quaternion, higher order Horadam number, Horadam number, quaternion.}

\begin{abstract}
Recently, Kulo\u{g}lu {\it et al.} \cite{Kul} introduced the higher order Horadam numbers. In this study, novel 3-parameter generalized quaternion sequences of higher order Horadam numbers, which have not been studied before, are defined by investigating the relationship between generalized quaternions, which are important mathematical objects used in physics and mathematics, and higher order Horadam quaternions, which are extensions of the higher order Horadam numbers to quaternion algebra. Also, the recurrence relations of sequences whose members are higher order Horadam 3-parameter generalized quaternions are described. Furthermore, certain properties of these generalized quaternions are presented, such as the generating and exponential function, summation and Binet formula, and some identities resulting from these quaternions are obtained.

\end{abstract}

\maketitle

\section{Introduction}
Kulo\u{g}lu {\it et al.} in \cite{Kul} introduced a number sequence $\left(W_{n}(s)\right)_{n\geq 0}$, called higher order Horadam sequence and $s\geq 1$ is a fixed integer. This sequence is defined as follows
\begin{equation}\label{e1}
W_{n}(s)=\frac{W_{sn}}{W_{s}}=\frac{1}{A\alpha^{s}-B\beta^{s}}\left[A\alpha^{sn}-B\beta^{sn}\right],
\end{equation}
where $W_{n}=\frac{1}{\alpha-\beta}\left[A\alpha^{n}-B\beta^{n}\right]$ is the $n$th Horadam number (also $\alpha$ and $\beta$ are distinct and nonzero real numbers, $A=W_{1}-W_{0}\beta$ and $B=W_{1}-W_{0}\alpha$). Moreover, the authors proved that the recurrence relation for the higher order Horadam numbers has the following form
\begin{equation}\label{e2}
W_{n+2}(s)=\left(\alpha^{s}+\beta^{s}\right)W_{n+1}(s)-\left(\alpha\beta\right)^{s}W_{n}(s),\ \ n\geq 0,
\end{equation}
with initial conditions $W_{0}(s)=\frac{1}{A\alpha^{s}-B\beta^{s}}\left[W_{0}(\alpha-\beta)\right]$ and $W_{1}(s)=1$. The roots of the characteristic equation $\lambda^{2}-\left(\alpha^{s}+\beta^{s}\right)\lambda+\left(\alpha\beta\right)^{s}=0$, associated with the recurrence relation (\ref{e2}) are $\alpha^{s}$ and $\beta^{s}$. Considering the initial values $W_{0}=0$ and $W_{1}=1$ in the Horadam sequence, we obtain a particular case of the higher order Horadam numbers, called higher order generalized Fibonacci numbers. This sequence is denoted by
\begin{equation}\label{e3}
U_{n}(s)=\frac{U_{sn}}{U_{s}}=\frac{1}{\alpha^{s}-\beta^{s}}\left[\alpha^{sn}-\beta^{sn}\right],
\end{equation}
where $\alpha$, $\beta$ are given by (\ref{e1}). The particular case $\alpha+\beta=1$ and $\alpha\beta=-1$, is called higher order Fibonacci numbers and denoted by $F_{n}(s)$. Many interesting properties of the higher order Horadam numbers are given in \cite{Kul}. Among others, the well-known are
\[
\left[W_{n}(s)\right]^{2}-W_{n-1}(s)W_{n+1}(s)=AB(\alpha\beta)^{s(n-1)}\left[\frac{\alpha^{s}-\beta^{s}}{A\alpha^{s}-B\beta^{s}}\right]^{2} \ (n\geq 1),
\]
\[
\left[W_{n}(s)\right]^{2}-W_{n-m}(s)W_{n+m}(s)=AB(\alpha\beta)^{s(n-m)}\left[\frac{\alpha^{sm}-\beta^{sm}}{A\alpha^{s}-B\beta^{s}}\right]^{2} 
\]
and
\[
W_{m+1}(s)W_{n}(s)-W_{m}(s)W_{n+1}(s)=AB(\alpha\beta)^{sm}\frac{\left[\alpha^{s}-\beta^{s}\right]\left[\alpha^{s(n-m)}-\beta^{s(n-m)}\right]}{\left[A\alpha^{s}-B\beta^{s}\right]^{2}},
\]
where $n\geq m$. The last identity has a different presentation than the one given in \cite{Kul}. Also, note that $AB=W_{1}^{2}-(\alpha+\beta)W_{0}W_{1}+\alpha\beta W_{0}^{2}$ in all three identities above. Similarly, we obtain
\[
\left[U_{n}(s)\right]^{2}-U_{n-1}(s)U_{n+1}(s)=(\alpha\beta)^{s(n-1)} \ (n\geq 1),
\]
\[
\left[U_{n}(s)\right]^{2}-U_{n-m}(s)U_{n+m}(s)=(\alpha\beta)^{s(n-m)}\left[U_{m}(s)\right]^{2} \ (n\geq m)
\]
and
\[
U_{m+1}(s)U_{n}(s)-U_{m}(s)U_{n+1}(s)=(\alpha\beta)^{sm}U_{n-m}(s).
\]
Note that if we replace $s=1$ in equations (\ref{e1}) and (\ref{e3}), we obtain the relations $W_{n}(1)=\frac{1}{A\alpha-B\beta}\left[A\alpha^{n}-B\beta^{n}\right]$ and $U_{n}(1)=U_{n}=\frac{1}{\alpha-\beta}\left[\alpha^{n}-\beta^{n}\right]$.

In this paper, we will use the following identities of the higher order Horadam numbers:
\begin{equation}\label{id1}
\sum_{r=0}^{n}W_{r}(s)=\frac{(\alpha\beta)^{s}W_{n}(s)-W_{n+1}(s)+W_{1}(s)+\left(1-(\alpha^{s}+\beta^{s})\right)W_{0}(s)}{(\alpha\beta)^{s}-(\alpha^{s}+\beta^{s})+1},
\end{equation}
\begin{equation}\label{id2}
\sum_{n=0}^{\infty}W_{n}(s)\lambda^{n}=\frac{W_{0}(s)+\left[W_{1}(s)-(\alpha^{s}+\beta^{s})W_{0}(s)\right]\lambda}{1-(\alpha^{s}+\beta^{s})\lambda+(\alpha\beta)^{s}\lambda^{2}}
\end{equation}
and
\begin{equation}\label{id3}
W_{n}(s)=W_{0}(s)U_{n+1}(s)+\left[1-(\alpha^{s}+\beta^{s})W_{0}(s)\right]U_{n}(s).
\end{equation}

\section{On higher order Horadam 3-parameter generalized quaternions}
On the other hand, \c{S}ent\"urk and \"Unal \cite{Se} investigated a new quite comprehensive quaternion type called 3-parameter generalized quaternions (shortly 3PGQs). The authors constructed a new and general aspect for the quaternion algebra depending on the 3-parameters to get a generalization of real, hyperbolic and 2-parameter generalized quaternions. A 3-parameter generalized quaternion is denoted by
\[
\textbf{Q}=x_{0}\textbf{1}+x_{1}\textbf{i}+x_{2}\textbf{j}+x_{3}\textbf{k},\ \ x_{r}\in \mathbb{R}\ (r=0,1,2,3),
\]
where quaternionic units $\{\textbf{1}, \textbf{i}, \textbf{j}, \textbf{k}\}$ hold the rules given in Table \ref{t1}. Taking $\lambda_{1}=\lambda_{2}=\lambda_{3}=1$ in Table \ref{t1}, we obtain the classical Hamilton quaternions with real coefficients. The conjugate of a 3-parameter generalized quaternion is given by $\textbf{Q}^{\dagger}=x_{0}\textbf{1}-x_{1}\textbf{i}-x_{2}\textbf{j}-x_{3}\textbf{k}$, the norm is 
\[
\textbf{Q}\textbf{Q}^{\dagger}=x_{0}^{2}+\lambda_{1}\lambda_{2}x_{1}^{2}+\lambda_{1}\lambda_{3}x_{2}^{2}+\lambda_{2}\lambda_{3}x_{3}^{2}.
\]
For the basics on 3-parameter generalized quaternions theory, see \cite{Se}. The 3-parameter generalized quaternion of sequences firstly were considered in 2024 by K\i z\i late\c{s} and Kibar \cite{Ki3}. They introduced higher order Fibonacci 3-parameter generalized quaternions by the equations
\[
\textbf{Q}F_{n}(s)=F_{n}(s)\textbf{1}+F_{n+1}(s)\textbf{i}+F_{n+2}(s)\textbf{j}+F_{n+3}(s)\textbf{k},
\]
where $F_{n}(s)$ denotes the $n$th higher order Fibonacci number.
\begin{table}
		\centering
\begin{tabular}{|c|c|c|c|c|}
\hline
$\downarrow \times \rightarrow$ & $\textbf{1}$ & $\textbf{i}$ & $\textbf{j}$ & $\textbf{k}$ \\
\hline
$\textbf{1}$ & $\textbf{1}$ & $\textbf{i}$ & $\textbf{j}$ & $\textbf{k}$ \\
\hline
$\textbf{i}$ & $\textbf{i}$ & $-\lambda_{1}\lambda_{2}\textbf{1}$ & $\lambda_{1}\textbf{k}$ & $-\lambda_{2}\textbf{j}$ \\
\hline
$\textbf{j}$ & $\textbf{j}$ & $-\lambda_{1}\textbf{k}$ & $-\lambda_{1}\lambda_{3}\textbf{1}$ & $\lambda_{3}\textbf{i}$ \\
\hline
$\textbf{k}$ & $\textbf{k}$ & $\lambda_{2}\textbf{j}$ & $-\lambda_{3}\textbf{i}$ & $-\lambda_{2}\lambda_{3}\textbf{1}$ \\
\hline
\end{tabular}
			\caption{3-parameter generalized quaternion multiplication table.}
			\label{t1} 
\end{table} 

The hyper complex numbers of the well-known higher order sequences have been investigated by several authors. For example, in \cite{Ki1} the higher order Fibonacci quaternions were introduced, in \cite{Ki2} the higher order Fibonacci hyper complex numbers were considered. In \cite{Ay,Ha,Ku,Ozi1,Ozi2,Ozk,Po,Uy} many interesting identities of some particular cases of integer sequences and hyper complex numbers  were established.

Here, we introduce higher order Horadam 3-parameter generalized quaternions and derive some identities such as Binet formulas, generating function, Catalan identities, d'Ocagne identities for these new type of generalized quaternions.

Let $n\geq 0$. The higher order Horadam 3-parameter generalized quaternion sequence $\left(\textbf{Q}W_{n}(s)\right)$ we define by the following relation
\begin{equation}\label{f1}
\textbf{Q}W_{n}(s)=W_{n}(s)\textbf{1}+W_{n+1}(s)\textbf{i}+W_{n+2}(s)\textbf{j}+W_{n+3}(s)\textbf{k},
\end{equation}
where $W_{n}(s)$ denotes the $n$th higher order Horadam number. In the same way we can define the higher order generalized Fibonacci 3-parameter generalized quaternion sequence  $\left(\textbf{Q}U_{n}(s)\right)$ as
\begin{equation}\label{f2}
\textbf{Q}U_{n}(s)=U_{n}(s)\textbf{1}+U_{n+1}(s)\textbf{i}+U_{n+2}(s)\textbf{j}+U_{n+3}(s)\textbf{k},
\end{equation}
where $U_{n}(s)$ is defined by (\ref{e3}).

The next theorems present some basic properties of the higher order Horadam 3-parameter generalized quaternions.
\begin{theorem}\label{t1}
Let $n\geq 0$ be an integer. Then
\[
\textnormal{\textbf{Q}}W_{n+2}(s)=(\alpha^{s}+\beta^{s})\textnormal{\textbf{Q}}W_{n+1}(s)-(\alpha\beta)^{s}\textnormal{\textbf{Q}}W_{n}(s),
\]
where $\textnormal{\textbf{Q}}W_{0}(s)$ and $\textnormal{\textbf{Q}}W_{1}(s)$ are given in (\ref{f1}).
\end{theorem}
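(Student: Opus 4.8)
The plan is to reduce everything to the scalar recurrence (\ref{e2}) applied componentwise. First I would write out $\textbf{Q}W_{n+2}(s)$ from the defining relation (\ref{f1}), namely
\[
\textbf{Q}W_{n+2}(s)=W_{n+2}(s)\textbf{1}+W_{n+3}(s)\textbf{i}+W_{n+4}(s)\textbf{j}+W_{n+5}(s)\textbf{k}.
\]
Each of the four scalar coefficients $W_{n+2}(s),W_{n+3}(s),W_{n+4}(s),W_{n+5}(s)$ has index at least $2$, so for every one of them the higher order Horadam recurrence (\ref{e2}) is available; I would substitute $W_{n+k+2}(s)=(\alpha^{s}+\beta^{s})W_{n+k+1}(s)-(\alpha\beta)^{s}W_{n+k}(s)$ for $k=0,1,2,3$.

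Next I would collect the resulting eight terms according to the two scalars $(\alpha^{s}+\beta^{s})$ and $-(\alpha\beta)^{s}$. Since these scalars commute with the quaternionic units $\textbf{1},\textbf{i},\textbf{j},\textbf{k}$ (the units only interact nontrivially with one another, per Table \ref{t1}), the $(\alpha^{s}+\beta^{s})$-group assembles into
\[
(\alpha^{s}+\beta^{s})\bigl(W_{n+1}(s)\textbf{1}+W_{n+2}(s)\textbf{i}+W_{n+3}(s)\textbf{j}+W_{n+4}(s)\textbf{k}\bigr)=(\alpha^{s}+\beta^{s})\,\textbf{Q}W_{n+1}(s),
\]
and the remaining group assembles into $-(\alpha\beta)^{s}\,\textbf{Q}W_{n}(s)$, again by (\ref{f1}). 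Adding the two gives the claimed identity.

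I do not expect a genuine obstacle here: the argument is purely a componentwise application of (\ref{e2}) together with the fact that real scalars are central in the 3-parameter generalized quaternion algebra. The only point worth stating explicitly is the index bookkeeping — that all indices appearing after one expansion step are $\ge 0$, so (\ref{e2}) legitimately applies — and that the same computation, verbatim with $U$ in place of $W$, yields the analogous recurrence for $\textbf{Q}U_{n}(s)$.
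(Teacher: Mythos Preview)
Your argument is correct and is essentially the paper's own proof run in the opposite direction: the paper starts from the right-hand side, expands $(\alpha^{s}+\beta^{s})\textbf{Q}W_{n+1}(s)-(\alpha\beta)^{s}\textbf{Q}W_{n}(s)$ via (\ref{f1}), groups by quaternionic unit, and applies (\ref{e2}) to each bracket to reach $\textbf{Q}W_{n+2}(s)$. The content is identical---a componentwise use of the scalar recurrence---so there is nothing further to add.
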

\begin{proof}
By formulas (\ref{f1}) and (\ref{e2}), we get
\begin{align*}
(\alpha^{s}+\beta^{s})&\textnormal{\textbf{Q}}W_{n+1}(s)-(\alpha\beta)^{s}\textnormal{\textbf{Q}}W_{n}(s)\\
&=(\alpha^{s}+\beta^{s})\left[W_{n+1}(s)\textbf{1}+W_{n+2}(s)\textbf{i}+W_{n+3}(s)\textbf{j}+W_{n+4}(s)\textbf{k}\right]\\
&\ \ - (\alpha\beta)^{s}\left[W_{n}(s)\textbf{1}+W_{n+1}(s)\textbf{i}+W_{n+2}(s)\textbf{j}+W_{n+3}(s)\textbf{k}\right]\\
&=\left[\left(\alpha^{s}+\beta^{s}\right)W_{n+1}(s)-\left(\alpha\beta\right)^{s}W_{n}(s)\right]\textbf{1}\\
&\ \ + \left[\left(\alpha^{s}+\beta^{s}\right)W_{n+2}(s)-\left(\alpha\beta\right)^{s}W_{n+1}(s)\right]\textbf{i}\\
&\ \ + \left[\left(\alpha^{s}+\beta^{s}\right)W_{n+3}(s)-\left(\alpha\beta\right)^{s}W_{n+2}(s)\right]\textbf{j}\\
&\ \ + \left[\left(\alpha^{s}+\beta^{s}\right)W_{n+4}(s)-\left(\alpha\beta\right)^{s}W_{n+3}(s)\right]\textbf{k}\\
&=W_{n+2}(s)\textbf{1}+W_{n+3}(s)\textbf{i}+W_{n+4}(s)\textbf{j}+W_{n+5}(s)\textbf{k}\\
&=\textnormal{\textbf{Q}}W_{n+2}(s),
\end{align*}
which ends the proof.
\end{proof}

\begin{theorem}\label{t2}
Let $n\geq 0$ be an integer. Then
\[
\textnormal{\textbf{Q}}W_{n}(s)\textnormal{\textbf{Q}}^{\dagger}W_{n}(s)=\frac{A^{2}\alpha^{2sn}\Phi_{\alpha}(s)-2AB(\alpha\beta)^{sn}\Phi_{\alpha\beta}(s)+B^{2}\beta^{2sn}\Phi_{\beta}(s)}{\left[A\alpha^{s}-B\beta^{s}\right]^{2}},
\]
where $\Phi_{\alpha}(s)=1+\lambda_{1}\lambda_{2}\alpha^{2s}+\lambda_{1}\lambda_{3}\alpha^{4s}+\lambda_{2}\lambda_{3}\alpha^{6s}$, $\Phi_{\alpha\beta}(s)=1+\lambda_{1}\lambda_{2}(\alpha\beta)^{s}+\lambda_{1}\lambda_{3}(\alpha\beta)^{2s}+\lambda_{2}\lambda_{3}(\alpha\beta)^{3s}$ and $\Phi_{\beta}(s)=1+\lambda_{1}\lambda_{2}\beta^{2s}+\lambda_{1}\lambda_{3}\beta^{4s}+\lambda_{2}\lambda_{3}\beta^{6s}$.
\end{theorem}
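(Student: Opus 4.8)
The plan is to collapse the quaternionic product to a scalar and then expand using the Binet-type formula (\ref{e1}). Writing $\textbf{Q}W_{n}(s)=W_{n}(s)\textbf{1}+W_{n+1}(s)\textbf{i}+W_{n+2}(s)\textbf{j}+W_{n+3}(s)\textbf{k}$ and applying the norm formula recalled from \cite{Se}, namely $\textbf{Q}\textbf{Q}^{\dagger}=x_{0}^{2}+\lambda_{1}\lambda_{2}x_{1}^{2}+\lambda_{1}\lambda_{3}x_{2}^{2}+\lambda_{2}\lambda_{3}x_{3}^{2}$, gives at once
\[
\textbf{Q}W_{n}(s)\textbf{Q}^{\dagger}W_{n}(s)=W_{n}(s)^{2}+\lambda_{1}\lambda_{2}W_{n+1}(s)^{2}+\lambda_{1}\lambda_{3}W_{n+2}(s)^{2}+\lambda_{2}\lambda_{3}W_{n+3}(s)^{2}.
\]
So the statement reduces to an identity purely about the real numbers $W_{n+j}(s)$ for $j=0,1,2,3$.

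Next I would substitute $W_{n+j}(s)=\dfrac{A\alpha^{s(n+j)}-B\beta^{s(n+j)}}{A\alpha^{s}-B\beta^{s}}$ and square, obtaining
\[
W_{n+j}(s)^{2}=\frac{A^{2}\alpha^{2sn}(\alpha^{2s})^{j}-2AB(\alpha\beta)^{sn}\bigl((\alpha\beta)^{s}\bigr)^{j}+B^{2}\beta^{2sn}(\beta^{2s})^{j}}{\left[A\alpha^{s}-B\beta^{s}\right]^{2}}.
\]
Multiplying the $j$-th such expression by its weight ($1$ for $j=0$, $\lambda_{1}\lambda_{2}$ for $j=1$, $\lambda_{1}\lambda_{3}$ for $j=2$, $\lambda_{2}\lambda_{3}$ for $j=3$) and summing, the coefficient of $A^{2}\alpha^{2sn}$ collects to $1+\lambda_{1}\lambda_{2}\alpha^{2s}+\lambda_{1}\lambda_{3}\alpha^{4s}+\lambda_{2}\lambda_{3}\alpha^{6s}=\Phi_{\alpha}(s)$, the coefficient of $-2AB(\alpha\beta)^{sn}$ collects to $1+\lambda_{1}\lambda_{2}(\alpha\beta)^{s}+\lambda_{1}\lambda_{3}(\alpha\beta)^{2s}+\lambda_{2}\lambda_{3}(\alpha\beta)^{3s}=\Phi_{\alpha\beta}(s)$, and the coefficient of $B^{2}\beta^{2sn}$ collects to $\Phi_{\beta}(s)$. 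Dividing by $\left[A\alpha^{s}-B\beta^{s}\right]^{2}$ yields exactly the claimed formula.

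There is no deep obstacle here; the only point requiring care is matching the grouping of cross terms with the three definitions, i.e.\ noticing that $\Phi_{\alpha}(s)$, $\Phi_{\alpha\beta}(s)$, $\Phi_{\beta}(s)$ are all the single polynomial $1+\lambda_{1}\lambda_{2}x+\lambda_{1}\lambda_{3}x^{2}+\lambda_{2}\lambda_{3}x^{3}$ evaluated at $x=\alpha^{2s}$, $x=(\alpha\beta)^{s}$, $x=\beta^{2s}$, which makes the bookkeeping automatic. An alternative, if one wished to avoid the closed form, would be an induction on $n$ via the recurrence in Theorem \ref{t1} applied componentwise, but the Binet substitution is cleaner and is the route I would take.
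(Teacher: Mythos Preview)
Your proof is correct and follows essentially the same route as the paper: apply the 3PGQ norm formula to reduce $\textbf{Q}W_{n}(s)\textbf{Q}^{\dagger}W_{n}(s)$ to the weighted sum of squares $W_{n}(s)^{2}+\lambda_{1}\lambda_{2}W_{n+1}(s)^{2}+\lambda_{1}\lambda_{3}W_{n+2}(s)^{2}+\lambda_{2}\lambda_{3}W_{n+3}(s)^{2}$, then substitute the Binet formula (\ref{e1}) for each $W_{n+j}(s)$, expand the squares, and regroup into the three $\Phi$-expressions. Your observation that $\Phi_{\alpha}(s)$, $\Phi_{\alpha\beta}(s)$, $\Phi_{\beta}(s)$ are all evaluations of the single polynomial $1+\lambda_{1}\lambda_{2}x+\lambda_{1}\lambda_{3}x^{2}+\lambda_{2}\lambda_{3}x^{3}$ is a nice touch that streamlines the regrouping step.
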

\begin{proof}
Using the definition of the conjugate of a 3-parameter generalized quaternion, we obtain 
\begin{align*}
&\left[A\alpha^{s}-B\beta^{s}\right]^{2}\textnormal{\textbf{Q}}W_{n}(s)\textnormal{\textbf{Q}}^{\dagger}W_{n}(s)\\
&=\left[A\alpha^{s}-B\beta^{s}\right]^{2}\left\lbrace \begin{array}{c}  \left[W_{n}(s)\right]^{2}+\lambda_{1}\lambda_{2}\left[W_{n+1}(s)\right]^{2}\\
+ \lambda_{1}\lambda_{3}\left[W_{n+2}(s)\right]^{2}+\lambda_{2}\lambda_{3}\left[W_{n+3}(s)\right]^{2} \end{array} 
\right\rbrace \\
&=A^{2}\alpha^{2sn}-2AB(\alpha\beta)^{sn}+B^{2}\beta^{2sn}\\
&\ \ + \lambda_{1}\lambda_{2}\left[A^{2}\alpha^{2s(n+1)}-2AB(\alpha\beta)^{s(n+1)}+B^{2}\beta^{2s(n+1)}\right]\\
&\ \ + \lambda_{1}\lambda_{3}\left[A^{2}\alpha^{2s(n+2)}-2AB(\alpha\beta)^{s(n+2)}+B^{2}\beta^{2s(n+2)}\right]\\
&\ \ + \lambda_{2}\lambda_{3}\left[A^{2}\alpha^{2s(n+3)}-2AB(\alpha\beta)^{s(n+3)}+B^{2}\beta^{2s(n+3)}\right]\\
&=A^{2}\alpha^{2sn}\left[1+\lambda_{1}\lambda_{2}\alpha^{2s}+\lambda_{1}\lambda_{3}\alpha^{4s}+\lambda_{2}\lambda_{3}\alpha^{6s}\right]\\\
&\ \ - 2AB(\alpha\beta)^{sn}\left[1+\lambda_{1}\lambda_{2}(\alpha\beta)^{s}+\lambda_{1}\lambda_{3}(\alpha\beta)^{2s}+\lambda_{2}\lambda_{3}(\alpha\beta)^{3s}\right]\\
&\ \ + B^{2}\beta^{2sn}\left[1+\lambda_{1}\lambda_{2}\beta^{2s}+\lambda_{1}\lambda_{3}\beta^{4s}+\lambda_{2}\lambda_{3}\beta^{6s}\right]\\
&=A^{2}\alpha^{2sn}\Phi_{\alpha}(s)-2AB(\alpha\beta)^{sn}\Phi_{\alpha\beta}(s)+B^{2}\beta^{2sn}\Phi_{\beta}(s),
\end{align*}
where $\Phi_{\alpha}(s)=1+\lambda_{1}\lambda_{2}\alpha^{2s}+\lambda_{1}\lambda_{3}\alpha^{4s}+\lambda_{2}\lambda_{3}\alpha^{6s}$, $\Phi_{\alpha\beta}(s)=1+\lambda_{1}\lambda_{2}(\alpha\beta)^{s}+\lambda_{1}\lambda_{3}(\alpha\beta)^{2s}+\lambda_{2}\lambda_{3}(\alpha\beta)^{3s}$ and $\Phi_{\beta}(s)=1+\lambda_{1}\lambda_{2}\beta^{2s}+\lambda_{1}\lambda_{3}\beta^{4s}+\lambda_{2}\lambda_{3}\beta^{6s}$. Hence we get the result.
\end{proof}

The next theorem gives the Binet formulas for the higher order Horadam 3-parameter generalized quaternions.
\begin{theorem}\label{t3}
Let $n\geq 0$ be an integer. Binet formulas for $\textnormal{\textbf{Q}}W_{n}(s)$ have the following form
\[
\textnormal{\textbf{Q}}W_{n}(s)=\frac{A\alpha^{sn}\Theta_{\alpha}(s)-B\beta^{sn}\Theta_{\beta}(s)}{A\alpha^{s}-B\beta^{s}},
\]
where $\Theta_{\alpha}(s)=\textnormal{\textbf{1}}+\alpha^{s}\textnormal{\textbf{i}}+\alpha^{2s}\textnormal{\textbf{j}}+\alpha^{3s}\textnormal{\textbf{k}}$ and $\Theta_{\beta}(s)=\textnormal{\textbf{1}}+\beta^{s}\textnormal{\textbf{i}}+\beta^{2s}\textnormal{\textbf{j}}+\beta^{3s}\textnormal{\textbf{k}}$.
\end{theorem}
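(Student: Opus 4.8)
The plan is a direct substitution of the Binet form \eqref{e1} of the higher order Horadam numbers into the defining relation \eqref{f1}, followed by regrouping the four scalar contributions according to the two geometric bases $\alpha^{sn}$ and $\beta^{sn}$. First I would write, for each $m\in\{0,1,2,3\}$,
\[
W_{n+m}(s)=\frac{A\alpha^{s(n+m)}-B\beta^{s(n+m)}}{A\alpha^{s}-B\beta^{s}}=\frac{A\alpha^{sn}(\alpha^{s})^{m}-B\beta^{sn}(\beta^{s})^{m}}{A\alpha^{s}-B\beta^{s}},
\]
and insert these four identities into $\textbf{Q}W_{n}(s)=W_{n}(s)\textbf{1}+W_{n+1}(s)\textbf{i}+W_{n+2}(s)\textbf{j}+W_{n+3}(s)\textbf{k}$.

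Next I would pull out the common denominator $A\alpha^{s}-B\beta^{s}$ and collect all terms carrying the factor $A\alpha^{sn}$ and, separately, all terms carrying $-B\beta^{sn}$. This produces
\[
(A\alpha^{s}-B\beta^{s})\,\textbf{Q}W_{n}(s)=A\alpha^{sn}\bigl(\textbf{1}+\alpha^{s}\textbf{i}+\alpha^{2s}\textbf{j}+\alpha^{3s}\textbf{k}\bigr)-B\beta^{sn}\bigl(\textbf{1}+\beta^{s}\textbf{i}+\beta^{2s}\textbf{j}+\beta^{3s}\textbf{k}\bigr),
\]
and recognizing the two parenthesized quaternions as $\Theta_{\alpha}(s)$ and $\Theta_{\beta}(s)$ yields the asserted Binet formula after dividing by $A\alpha^{s}-B\beta^{s}$. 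Note that the multiplication rules of Table \ref{t1} play no role here: the computation only manipulates the four real coefficients of $\textbf{1},\textbf{i},\textbf{j},\textbf{k}$ independently, since $A,B,\alpha^{s},\beta^{s}$ are real scalars and commute with the units.

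As an alternative I could argue by induction: check the formula for $n=0$ and $n=1$ against $\textbf{Q}W_{0}(s)$ and $\textbf{Q}W_{1}(s)$, and then observe that the right-hand side satisfies the recurrence $\textbf{Q}W_{n+2}(s)=(\alpha^{s}+\beta^{s})\textbf{Q}W_{n+1}(s)-(\alpha\beta)^{s}\textbf{Q}W_{n}(s)$ of Theorem \ref{t1}, because $\alpha^{s}$ and $\beta^{s}$ are precisely the roots of $\lambda^{2}-(\alpha^{s}+\beta^{s})\lambda+(\alpha\beta)^{s}=0$. This route is a little longer, so I would prefer the direct substitution. In either case there is no genuine obstacle; the only point requiring care is the bookkeeping of the basis units during the regrouping, so that the coefficients inside $\Theta_{\alpha}(s)$ come out exactly as $1,\alpha^{s},\alpha^{2s},\alpha^{3s}$ (and likewise for $\Theta_{\beta}(s)$) rather than being shifted by a power of $\alpha^{s}$ or $\beta^{s}$.
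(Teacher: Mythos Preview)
Your proposal is correct and matches the paper's own argument essentially line for line: the paper also multiplies through by $A\alpha^{s}-B\beta^{s}$, substitutes the scalar Binet form \eqref{e1} into each component of \eqref{f1}, and regroups the $\textbf{1},\textbf{i},\textbf{j},\textbf{k}$ coefficients to extract $\Theta_{\alpha}(s)$ and $\Theta_{\beta}(s)$. Your remark that the multiplication rules of Table~\ref{t1} are not needed here is accurate, and the inductive alternative you sketch is valid but, as you note, unnecessary.
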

\begin{proof}
By formulas (\ref{f1}) and (\ref{e1}), we get
\begin{align*}
\left[A\alpha^{s}-B\beta^{s}\right]&\textbf{Q}W_{n}(s)\\
&=\left[A\alpha^{s}-B\beta^{s}\right]\left[W_{n}(s)\textbf{1}+W_{n+1}(s)\textbf{i}+W_{n+2}(s)\textbf{j}+W_{n+3}(s)\textbf{k}\right]\\
&=\left[A\alpha^{sn}-B\beta^{sn}\right]\textbf{1}+\left[A\alpha^{s(n+1)}-B\beta^{s(n+1)}\right]\textbf{i}\\
&\ \ + \left[A\alpha^{s(n+2)}-B\beta^{s(n+2)}\right]\textbf{j}+\left[A\alpha^{s(n+3)}-B\beta^{s(n+3)}\right]\textbf{k}\\
&=A\alpha^{sn}\left[\textbf{1}+\alpha^{s}\textbf{i}+\alpha^{2s}\textbf{j}+\alpha^{3s}\textbf{k}\right]-B\beta^{sn}\left[\textbf{1}+\beta^{s}\textbf{i}+\beta^{2s}\textbf{j}+\beta^{3s}\textbf{k}\right]\\
&=A\alpha^{sn}\Theta_{\alpha}(s)-B\beta^{sn}\Theta_{\beta}(s),
\end{align*}
where $\Theta_{\alpha}(s)=\textbf{1}+\alpha^{s}\textbf{i}+\alpha^{2s}\textbf{j}+\alpha^{3s}\textbf{k}$ and $\Theta_{\beta}(s)=\textbf{1}+\beta^{s}\textbf{i}+\beta^{2s}\textbf{j}+\beta^{3s}\textbf{k}$. Hence we get the result.
\end{proof}

Considering the initial conditions $W_{0}=0$ and $W_{1}$ in the Theorem \ref{t3}, we obtain the following result for the higher order generalized Fibonacci 3-parameter generalized quaternions.
\begin{corollary}
Let $n\geq 0$ be an integer. Binet formulas for $\textnormal{\textbf{Q}}U_{n}(s)$ have the following form
\[
\textnormal{\textbf{Q}}U_{n}(s)=\frac{\alpha^{sn}\Theta_{\alpha}(s)-\beta^{sn}\Theta_{\beta}(s)}{\alpha^{s}-\beta^{s}},
\]
where $\Theta_{\alpha}(s)=\textnormal{\textbf{1}}+\alpha^{s}\textnormal{\textbf{i}}+\alpha^{2s}\textnormal{\textbf{j}}+\alpha^{3s}\textnormal{\textbf{k}}$ and $\Theta_{\beta}(s)=\textnormal{\textbf{1}}+\beta^{s}\textnormal{\textbf{i}}+\beta^{2s}\textnormal{\textbf{j}}+\beta^{3s}\textnormal{\textbf{k}}$.
\end{corollary}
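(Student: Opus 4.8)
The plan is to obtain the statement as a direct specialization of Theorem~\ref{t3}. First I would recall that, by definition, the higher order generalized Fibonacci numbers $U_{n}(s)$ are exactly the higher order Horadam numbers $W_{n}(s)$ attached to the initial values $W_{0}=0$ and $W_{1}=1$. Under this choice the auxiliary constants collapse: $A=W_{1}-W_{0}\beta=1$ and $B=W_{1}-W_{0}\alpha=1$. Hence $W_{n}(s)=U_{n}(s)$ for all $n$, the defining quaternion (\ref{f1}) becomes (\ref{f2}), i.e. $\textbf{Q}W_{n}(s)=\textbf{Q}U_{n}(s)$, and the denominator $A\alpha^{s}-B\beta^{s}$ appearing in Theorem~\ref{t3} becomes $\alpha^{s}-\beta^{s}$. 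Substituting these reductions into the formula of Theorem~\ref{t3} yields
\[
\textbf{Q}U_{n}(s)=\frac{\alpha^{sn}\Theta_{\alpha}(s)-\beta^{sn}\Theta_{\beta}(s)}{\alpha^{s}-\beta^{s}},
\]
with the very same $\Theta_{\alpha}(s)=\textbf{1}+\alpha^{s}\textbf{i}+\alpha^{2s}\textbf{j}+\alpha^{3s}\textbf{k}$ and $\Theta_{\beta}(s)=\textbf{1}+\beta^{s}\textbf{i}+\beta^{2s}\textbf{j}+\beta^{3s}\textbf{k}$, since these coefficient vectors never involved $A$ or $B$.

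Alternatively, for a self-contained argument I would simply rerun the computation of Theorem~\ref{t3} with (\ref{e1}) replaced by (\ref{e3}): multiply $\textbf{Q}U_{n}(s)$ by $\alpha^{s}-\beta^{s}$, expand each coordinate $U_{n+r}(s)$ via (\ref{e3}) as $\bigl(\alpha^{s(n+r)}-\beta^{s(n+r)}\bigr)/(\alpha^{s}-\beta^{s})$ for $r=0,1,2,3$, factor $\alpha^{sn}$ out of the $\alpha$-terms and $\beta^{sn}$ out of the $\beta$-terms, and recognize the two bracketed quaternions as $\Theta_{\alpha}(s)$ and $\Theta_{\beta}(s)$. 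Dividing back through by $\alpha^{s}-\beta^{s}$ gives the claim. Note that no appeal to the multiplication Table~\ref{t1} is required, because this identity is purely additive in the four quaternionic components.

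There is essentially no obstacle here: the result is a genuine corollary of Theorem~\ref{t3}. The only point meriting care is the bookkeeping that $W_{0}=0$, $W_{1}=1$ forces $A=B=1$, so that both the prefactor $A$ (in the numerator) and the denominator $A\alpha^{s}-B\beta^{s}$ of Theorem~\ref{t3} simplify consistently; everything else is a transcription of the already-established Binet formula.
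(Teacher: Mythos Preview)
Your proposal is correct and matches the paper's own treatment: the paper simply remarks that taking $W_{0}=0$, $W_{1}=1$ in Theorem~\ref{t3} yields the corollary, and your proof spells out exactly this specialization (with $A=B=1$) together with an optional direct recomputation. There is nothing to add.
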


\section{Catalan, Cassini and d'Ocagne identities for the higher order Horadam 3-parameter generalized quaternions}
Now we will give some identities for the higher order Horadam 3-parameter generalized quaternion, these identities are easily proved using the Binet formulas in Theorem \ref{t3}. Furthermore, using $\Theta_{\alpha}(s)=\textbf{1}+\alpha^{s}\textbf{i}+\alpha^{2s}\textbf{j}+\alpha^{3s}\textbf{k}$ and $\Theta_{\beta}(s)=\textbf{1}+\beta^{s}\textbf{i}+\beta^{2s}\textbf{j}+\beta^{3s}\textbf{k}$, we get
\[
\Theta_{\alpha}(s)\Theta_{\beta}(s)=\left\lbrace \begin{array}{c}
\left[1-\lambda_{1}\lambda_{2}(\alpha\beta)^{s}-\lambda_{1}\lambda_{3}(\alpha\beta)^{2s}-\lambda_{2}\lambda_{3}(\alpha\beta)^{3s}\right]\textbf{1}\\
+(\alpha^{s}+\beta^{s})\textbf{i}+(\alpha^{2s}+\beta^{2s})\textbf{j}+(\alpha^{s3}+\beta^{3s})\textbf{k}\\
-(\alpha\beta)^{s}(\alpha^{s}-\beta^{s})\left[\lambda_{3}(\alpha\beta)^{s}\textbf{i}-\lambda_{2}(\alpha^{s}+\beta^{s})\textbf{j}+\lambda_{1}\textbf{k}\right]
\end{array}
\right\rbrace 
\]
and 
\[
\Theta_{\beta}(s)\Theta_{\alpha}(s)=\left\lbrace \begin{array}{c}
\left[1-\lambda_{1}\lambda_{2}(\alpha\beta)^{s}-\lambda_{1}\lambda_{3}(\alpha\beta)^{2s}-\lambda_{2}\lambda_{3}(\alpha\beta)^{3s}\right]\textbf{1}\\
+(\alpha^{s}+\beta^{s})\textbf{i}+(\alpha^{2s}+\beta^{2s})\textbf{j}+(\alpha^{s3}+\beta^{3s})\textbf{k}\\
+(\alpha\beta)^{s}(\alpha^{s}-\beta^{s})\left[\lambda_{3}(\alpha\beta)^{s}\textbf{i}-\lambda_{2}(\alpha^{s}+\beta^{s})\textbf{j}+\lambda_{1}\textbf{k}\right]
\end{array}
\right\rbrace .
\]
The two-element commutator, $\Theta_{\beta}(s)$ and $\Theta_{\alpha}(s)$, is the element
\begin{equation}\label{c}
\begin{aligned}
\left[\Theta_{\beta}(s),\Theta_{\alpha}(s)\right]&=\Theta_{\beta}(s)\Theta_{\alpha}(s)-\Theta_{\alpha}(s)\Theta_{\beta}(s)\\
&=2(\alpha\beta)^{s}(\alpha^{s}-\beta^{s})\left[\lambda_{3}(\alpha\beta)^{s}\textbf{i}-\lambda_{2}(\alpha^{s}+\beta^{s})\textbf{j}+\lambda_{1}\textbf{k}\right].
\end{aligned}
\end{equation}

\begin{theorem}[Catalan identity for $\textnormal{\textbf{Q}}W_{n}(s)$]\label{t5}
Let $n\geq 0$, $m\geq 0$ be integers such that $n\geq m$. Then
\begin{align*}
\left[\textnormal{\textbf{Q}}W_{n}(s)\right]^{2}&-\textnormal{\textbf{Q}}W_{n-m}(s)\textnormal{\textbf{Q}}W_{n+m}(s)\\
&=\frac{AB(\alpha\beta)^{s(n-m)}\left(\alpha^{sm}-\beta^{sm}\right)}{\left[A\alpha^{s}-B\beta^{s}\right]^{2}}\left\lbrace \begin{array}{c} 
\left(\alpha^{sm}-\beta^{sm}\right)\Theta_{\alpha}(s)\Theta_{\beta}(s)\\
+ \alpha^{sm}\left[\Theta_{\beta}(s),\Theta_{\alpha}(s)\right]
\end{array}\right\rbrace ,
\end{align*}
where $\left[\Theta_{\beta}(s),\Theta_{\alpha}(s)\right]=2(\alpha\beta)^{s}(\alpha^{s}-\beta^{s})\left[\lambda_{3}(\alpha\beta)^{s}\textnormal{\textbf{i}}-\lambda_{2}(\alpha^{s}+\beta^{s})\textnormal{\textbf{j}}+\lambda_{1}\textnormal{\textbf{k}}\right]$.
\end{theorem}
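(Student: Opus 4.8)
The plan is to feed the Binet formula of Theorem \ref{t3} into both terms on the left-hand side and expand in the 3PGQ algebra, being careful that quaternion multiplication is noncommutative while the real scalars $A$, $B$, $\alpha^{sk}$, $\beta^{sk}$ are central. Write $D=A\alpha^{s}-B\beta^{s}$, so that Theorem \ref{t3} reads $D\,\textnormal{\textbf{Q}}W_{k}(s)=A\alpha^{sk}\Theta_{\alpha}(s)-B\beta^{sk}\Theta_{\beta}(s)$ for every index $k$, where $\Theta_{\alpha}(s)$ and $\Theta_{\beta}(s)$ are fixed quaternions not depending on $k$. Squaring and using that the scalars commute past the $\Theta$'s gives
\[
D^{2}\left[\textnormal{\textbf{Q}}W_{n}(s)\right]^{2}=A^{2}\alpha^{2sn}\Theta_{\alpha}(s)^{2}-AB(\alpha\beta)^{sn}\left(\Theta_{\alpha}(s)\Theta_{\beta}(s)+\Theta_{\beta}(s)\Theta_{\alpha}(s)\right)+B^{2}\beta^{2sn}\Theta_{\beta}(s)^{2},
\]
and, likewise, multiplying the Binet expressions for indices $n-m$ and $n+m$,
\[
D^{2}\,\textnormal{\textbf{Q}}W_{n-m}(s)\textnormal{\textbf{Q}}W_{n+m}(s)=A^{2}\alpha^{2sn}\Theta_{\alpha}(s)^{2}-AB\alpha^{s(n-m)}\beta^{s(n+m)}\Theta_{\alpha}(s)\Theta_{\beta}(s)-AB\alpha^{s(n+m)}\beta^{s(n-m)}\Theta_{\beta}(s)\Theta_{\alpha}(s)+B^{2}\beta^{2sn}\Theta_{\beta}(s)^{2}.
\]

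Next I would subtract these two identities. The pure $\Theta_{\alpha}(s)^{2}$ and $\Theta_{\beta}(s)^{2}$ contributions cancel, leaving only the mixed products, so that $D^{2}$ times the left-hand side of the theorem equals $AB\bigl[(\alpha^{s(n-m)}\beta^{s(n+m)}-(\alpha\beta)^{sn})\Theta_{\alpha}(s)\Theta_{\beta}(s)+(\alpha^{s(n+m)}\beta^{s(n-m)}-(\alpha\beta)^{sn})\Theta_{\beta}(s)\Theta_{\alpha}(s)\bigr]$. Then I would factor $(\alpha\beta)^{s(n-m)}$ out of each scalar coefficient and use $\beta^{2sm}-(\alpha\beta)^{sm}=-\beta^{sm}(\alpha^{sm}-\beta^{sm})$ and $\alpha^{2sm}-(\alpha\beta)^{sm}=\alpha^{sm}(\alpha^{sm}-\beta^{sm})$ to rewrite the bracket as $(\alpha\beta)^{s(n-m)}(\alpha^{sm}-\beta^{sm})\bigl(-\beta^{sm}\Theta_{\alpha}(s)\Theta_{\beta}(s)+\alpha^{sm}\Theta_{\beta}(s)\Theta_{\alpha}(s)\bigr)$.

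The last, cosmetic step is to split $-\beta^{sm}\Theta_{\alpha}(s)\Theta_{\beta}(s)+\alpha^{sm}\Theta_{\beta}(s)\Theta_{\alpha}(s)=(\alpha^{sm}-\beta^{sm})\Theta_{\alpha}(s)\Theta_{\beta}(s)+\alpha^{sm}\bigl(\Theta_{\beta}(s)\Theta_{\alpha}(s)-\Theta_{\alpha}(s)\Theta_{\beta}(s)\bigr)$ and recognise the parenthesised difference as the commutator $[\Theta_{\beta}(s),\Theta_{\alpha}(s)]$ already evaluated in (\ref{c}). Dividing through by $D^{2}=\left[A\alpha^{s}-B\beta^{s}\right]^{2}$ produces exactly the stated formula.

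The computation needs no new identity for $W_{n}(s)$ or $U_{n}(s)$; it is purely the bilinear expansion of products in the 3PGQ algebra together with elementary manipulation of powers of $\alpha^{s}$ and $\beta^{s}$. The only point demanding care — the modest ``main obstacle'' — is bookkeeping the order of the quaternion products $\Theta_{\alpha}(s)\Theta_{\beta}(s)$ versus $\Theta_{\beta}(s)\Theta_{\alpha}(s)$ throughout, since these differ by the nonzero commutator (\ref{c}); centrality of the real coefficients $A$, $B$, $\alpha^{sk}$, $\beta^{sk}$ is what lets the squared difference and the product be expanded without further fuss.
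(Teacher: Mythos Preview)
Your argument is correct and follows essentially the same route as the paper's own proof: substitute the Binet formula of Theorem~\ref{t3}, expand both $\left[\textnormal{\textbf{Q}}W_{n}(s)\right]^{2}$ and $\textnormal{\textbf{Q}}W_{n-m}(s)\textnormal{\textbf{Q}}W_{n+m}(s)$, observe that the $\Theta_{\alpha}(s)^{2}$ and $\Theta_{\beta}(s)^{2}$ terms cancel, factor $(\alpha\beta)^{s(n-m)}(\alpha^{sm}-\beta^{sm})$ from the surviving cross terms, and finally write $\alpha^{sm}\Theta_{\beta}(s)\Theta_{\alpha}(s)-\beta^{sm}\Theta_{\alpha}(s)\Theta_{\beta}(s)$ in terms of the commutator~(\ref{c}). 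Your presentation is in fact slightly more explicit than the paper's in justifying the intermediate factorisations, but there is no substantive difference in method.
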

\begin{proof}
By formula (\ref{c}) and Theorem \ref{t3}, we get
\begin{align*}
&\left[A\alpha^{s}-B\beta^{s}\right]^{2}\left\lbrace \left[\textnormal{\textbf{Q}}W_{n}(s)\right]^{2}-\textnormal{\textbf{Q}}W_{n-m}(s)\textnormal{\textbf{Q}}W_{n+m}(s)\right\rbrace \\
&=\left[A\alpha^{sn}\Theta_{\alpha}(s)-B\beta^{sn}\Theta_{\beta}(s)\right]\left[A\alpha^{sn}\Theta_{\alpha}(s)-B\beta^{sn}\Theta_{\beta}(s)\right]\\
&\ \ - \left[A\alpha^{s(n-m)}\Theta_{\alpha}(s)-B\beta^{s(n-m)}\Theta_{\beta}(s)\right]\left[A\alpha^{s(n+m)}\Theta_{\alpha}(s)-B\beta^{s(n+m)}\Theta_{\beta}(s)\right]\\
&=-AB(\alpha\beta)^{sn}\Theta_{\alpha}(s)\Theta_{\beta}(s)-AB(\alpha\beta)^{sn}\Theta_{\beta}(s)\Theta_{\alpha}(s)\\
&\ \ + AB(\alpha\beta)^{sn}\alpha^{-sm}\beta^{sm}\Theta_{\alpha}(s)\Theta_{\beta}(s)+AB(\alpha\beta)^{sn}\alpha^{sm}\beta^{-sm}\Theta_{\beta}(s)\Theta_{\alpha}(s)\\
&=AB(\alpha\beta)^{s(n-m)}\left(\alpha^{sm}-\beta^{sm}\right)\left[\alpha^{sm}\Theta_{\beta}(s)\Theta_{\alpha}(s)-\beta^{sm}\Theta_{\alpha}(s)\Theta_{\beta}(s)\right]\\
&=AB(\alpha\beta)^{s(n-m)}\left(\alpha^{sm}-\beta^{sm}\right)\left\lbrace \begin{array}{c} 
\left(\alpha^{sm}-\beta^{sm}\right)\Theta_{\alpha}(s)\Theta_{\beta}(s)\\
+ \alpha^{sm}\left[\Theta_{\beta}(s),\Theta_{\alpha}(s)\right]
\end{array}\right\rbrace ,
\end{align*}
where $\left[\Theta_{\beta}(s),\Theta_{\alpha}(s)\right]=2(\alpha\beta)^{s}(\alpha^{s}-\beta^{s})\left[\lambda_{3}(\alpha\beta)^{s}\textbf{i}-\lambda_{2}(\alpha^{s}+\beta^{s})\textbf{j}+\lambda_{1}\textbf{k}\right]$. Hence we get the result.
\end{proof}

Note that for $m=1$ in Theorem \ref{t5} we have the Cassini identity for the higher order Horadam 3-parameter generalized quaternions.
\begin{corollary}
Let $n$ be an integer such that $n\geq 1$. Then
\begin{align*}
&\left[\textnormal{\textbf{Q}}W_{n}(s)\right]^{2}-\textnormal{\textbf{Q}}W_{n-1}(s)\textnormal{\textbf{Q}}W_{n+1}(s)\\
&=\frac{AB(\alpha\beta)^{s(n-1)}\left(\alpha^{s}-\beta^{s}\right)}{\left[A\alpha^{s}-B\beta^{s}\right]^{2}}\left\lbrace \begin{array}{c} 
\left(\alpha^{s}-\beta^{s}\right)\Theta_{\alpha}(s)\Theta_{\beta}(s)+ \alpha^{s}\left[\Theta_{\beta}(s),\Theta_{\alpha}(s)\right]
\end{array}\right\rbrace ,
\end{align*}
where $\left[\Theta_{\beta}(s),\Theta_{\alpha}(s)\right]=2(\alpha\beta)^{s}(\alpha^{s}-\beta^{s})\left[\lambda_{3}(\alpha\beta)^{s}\textnormal{\textbf{i}}-\lambda_{2}(\alpha^{s}+\beta^{s})\textnormal{\textbf{j}}+\lambda_{1}\textnormal{\textbf{k}}\right]$.
\end{corollary}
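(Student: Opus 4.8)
The plan is to obtain this identity as the special case $m=1$ of the Catalan identity established in Theorem~\ref{t5}. That theorem was proved for all integers $n\geq m\geq 0$, so specializing $m=1$ is legitimate precisely when $n\geq 1$, which is exactly the hypothesis of the corollary. First I would set $m=1$ throughout the conclusion of Theorem~\ref{t5}: the left-hand side $\left[\textnormal{\textbf{Q}}W_{n}(s)\right]^{2}-\textnormal{\textbf{Q}}W_{n-m}(s)\textnormal{\textbf{Q}}W_{n+m}(s)$ becomes $\left[\textnormal{\textbf{Q}}W_{n}(s)\right]^{2}-\textnormal{\textbf{Q}}W_{n-1}(s)\textnormal{\textbf{Q}}W_{n+1}(s)$.

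Next I would carry the substitution through the right-hand side: the prefactor $AB(\alpha\beta)^{s(n-m)}\left(\alpha^{sm}-\beta^{sm}\right)\big/\left[A\alpha^{s}-B\beta^{s}\right]^{2}$ turns into $AB(\alpha\beta)^{s(n-1)}\left(\alpha^{s}-\beta^{s}\right)\big/\left[A\alpha^{s}-B\beta^{s}\right]^{2}$, and inside the braces the factor $\left(\alpha^{sm}-\beta^{sm}\right)\Theta_{\alpha}(s)\Theta_{\beta}(s)+\alpha^{sm}\left[\Theta_{\beta}(s),\Theta_{\alpha}(s)\right]$ becomes $\left(\alpha^{s}-\beta^{s}\right)\Theta_{\alpha}(s)\Theta_{\beta}(s)+\alpha^{s}\left[\Theta_{\beta}(s),\Theta_{\alpha}(s)\right]$. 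The commutator term $\left[\Theta_{\beta}(s),\Theta_{\alpha}(s)\right]=2(\alpha\beta)^{s}(\alpha^{s}-\beta^{s})\left[\lambda_{3}(\alpha\beta)^{s}\textnormal{\textbf{i}}-\lambda_{2}(\alpha^{s}+\beta^{s})\textnormal{\textbf{j}}+\lambda_{1}\textnormal{\textbf{k}}\right]$ does not depend on $m$ at all (see formula~(\ref{c})), so it is simply recopied. Collecting these pieces yields exactly the asserted formula.

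There is essentially no obstacle here, since the statement is a genuine specialization rather than a new computation; the only thing to be careful about is the index range, namely that putting $m=1$ in $n\geq m$ forces $n\geq 1$, consistent with the hypothesis. If one prefers a self-contained argument, the same identity also follows by repeating verbatim the Binet-formula computation in the proof of Theorem~\ref{t5} with $m$ replaced by $1$, using Theorem~\ref{t3} to write $\left[A\alpha^{s}-B\beta^{s}\right]\textnormal{\textbf{Q}}W_{k}(s)=A\alpha^{sk}\Theta_{\alpha}(s)-B\beta^{sk}\Theta_{\beta}(s)$ for $k\in\{n-1,n,n+1\}$, expanding the two products, cancelling the $A^{2}$ and $B^{2}$ terms, and grouping the mixed $AB$ terms into $AB(\alpha\beta)^{s(n-1)}(\alpha^{s}-\beta^{s})\big[\alpha^{s}\Theta_{\beta}(s)\Theta_{\alpha}(s)-\beta^{s}\Theta_{\alpha}(s)\Theta_{\beta}(s)\big]$, which rewrites as the displayed right-hand side via~(\ref{c}). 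Either route completes the proof.
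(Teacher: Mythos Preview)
Your proposal is correct and matches the paper's approach exactly: the paper simply remarks that the corollary is the case $m=1$ of Theorem~\ref{t5}, which is precisely what you do. Your additional self-contained Binet-formula argument is also valid but is not needed.
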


\begin{theorem}[d'Ocagne identity for $\textnormal{\textbf{Q}}W_{n}(s)$]\label{t6}
Let $n\geq 0$, $m\geq 0$ be integers such that $n\geq m$. Then
\begin{align*}
\textnormal{\textbf{Q}}W_{m+1}(s)&\textnormal{\textbf{Q}}W_{n}(s)-\textnormal{\textbf{Q}}W_{m}(s)\textnormal{\textbf{Q}}W_{n+1}(s)\\
&=\frac{AB(\alpha\beta)^{sm}\left(\alpha^{s}-\beta^{s}\right)}{\left[A\alpha^{s}-B\beta^{s}\right]^{2}}\left\lbrace \begin{array}{c} 
\left(\alpha^{s(n-m)}-\beta^{s(n-m)}\right)\Theta_{\alpha}(s)\Theta_{\beta}(s)\\
+ \alpha^{s(n-m)}\left[\Theta_{\beta}(s),\Theta_{\alpha}(s)\right]
\end{array}\right\rbrace ,
\end{align*}
where $\left[\Theta_{\beta}(s),\Theta_{\alpha}(s)\right]=2(\alpha\beta)^{s}(\alpha^{s}-\beta^{s})\left[\lambda_{3}(\alpha\beta)^{s}\textnormal{\textbf{i}}-\lambda_{2}(\alpha^{s}+\beta^{s})\textnormal{\textbf{j}}+\lambda_{1}\textnormal{\textbf{k}}\right]$.
\end{theorem}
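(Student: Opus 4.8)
The plan is to follow the same recipe used in the proofs of Theorems \ref{t3} and \ref{t5}: reduce everything to the Binet formula of Theorem \ref{t3} and then expand, being careful never to commute the quaternionic factors. Concretely, I would multiply the left-hand side by $\left[A\alpha^{s}-B\beta^{s}\right]^{2}$ and substitute
\[
\left[A\alpha^{s}-B\beta^{s}\right]\textbf{Q}W_{k}(s)=A\alpha^{sk}\Theta_{\alpha}(s)-B\beta^{sk}\Theta_{\beta}(s)
\]
for $k\in\{m,m+1,n,n+1\}$. This turns the claim into an identity between the difference of two products of binomials in $\Theta_{\alpha}(s)$ and $\Theta_{\beta}(s)$.

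Next I would expand both products, keeping the order of the factors intact so that $\Theta_{\alpha}(s)\Theta_{\beta}(s)$ and $\Theta_{\beta}(s)\Theta_{\alpha}(s)$ are treated as distinct quantities. The pure terms $A^{2}\alpha^{s(m+n+1)}\Theta_{\alpha}(s)^{2}$ and $B^{2}\beta^{s(m+n+1)}\Theta_{\beta}(s)^{2}$ occur with identical coefficients in both products (since $(m+1)+n=m+(n+1)$) and cancel. Collecting the remaining mixed terms leaves
\[
AB\alpha^{sm}\beta^{sn}(\beta^{s}-\alpha^{s})\,\Theta_{\alpha}(s)\Theta_{\beta}(s)+AB\alpha^{sn}\beta^{sm}(\alpha^{s}-\beta^{s})\,\Theta_{\beta}(s)\Theta_{\alpha}(s),
\]
which I would factor as
\[
AB(\alpha\beta)^{sm}(\alpha^{s}-\beta^{s})\left[\alpha^{s(n-m)}\Theta_{\beta}(s)\Theta_{\alpha}(s)-\beta^{s(n-m)}\Theta_{\alpha}(s)\Theta_{\beta}(s)\right].
\]

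The last step — structurally identical to the closing step of the proof of Theorem \ref{t5}, with $m$ replaced by $n-m$ inside the bracket — is to write $\Theta_{\beta}(s)\Theta_{\alpha}(s)=\Theta_{\alpha}(s)\Theta_{\beta}(s)+\left[\Theta_{\beta}(s),\Theta_{\alpha}(s)\right]$ using (\ref{c}), so that
\[
\alpha^{s(n-m)}\Theta_{\beta}(s)\Theta_{\alpha}(s)-\beta^{s(n-m)}\Theta_{\alpha}(s)\Theta_{\beta}(s)=\left(\alpha^{s(n-m)}-\beta^{s(n-m)}\right)\Theta_{\alpha}(s)\Theta_{\beta}(s)+\alpha^{s(n-m)}\left[\Theta_{\beta}(s),\Theta_{\alpha}(s)\right].
\]
Dividing through by $\left[A\alpha^{s}-B\beta^{s}\right]^{2}$ and quoting the explicit value of $\left[\Theta_{\beta}(s),\Theta_{\alpha}(s)\right]$ from (\ref{c}) gives the assertion. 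I do not expect a genuine obstacle here; the only delicate point is the non-commutative bookkeeping in the expansion — as already flagged before Theorem \ref{t5} — together with tracking the sign pattern of the mixed terms so that the factor $\alpha^{s}-\beta^{s}$ is extracted with the correct orientation.
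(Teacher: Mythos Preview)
Your proposal is correct and follows essentially the same approach as the paper's proof: both multiply through by $\left[A\alpha^{s}-B\beta^{s}\right]^{2}$, substitute the Binet formulas from Theorem~\ref{t3}, observe that the $\Theta_{\alpha}(s)^{2}$ and $\Theta_{\beta}(s)^{2}$ terms cancel, factor the surviving mixed terms into $AB(\alpha\beta)^{sm}(\alpha^{s}-\beta^{s})\bigl[\alpha^{s(n-m)}\Theta_{\beta}(s)\Theta_{\alpha}(s)-\beta^{s(n-m)}\Theta_{\alpha}(s)\Theta_{\beta}(s)\bigr]$, and then rewrite via the commutator (\ref{c}). Your intermediate expressions and sign tracking match the paper's line by line.
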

\begin{proof}
By formula (\ref{c}) and Theorem \ref{t3}, we get
\begin{align*}
&\left[A\alpha^{s}-B\beta^{s}\right]^{2}\left[\textnormal{\textbf{Q}}W_{m+1}(s)\textnormal{\textbf{Q}}W_{n}(s)-\textnormal{\textbf{Q}}W_{m}(s)\textnormal{\textbf{Q}}W_{n+1}(s)\right] \\
&=\left[A\alpha^{s(m+1)}\Theta_{\alpha}(s)-B\beta^{s(m+1)}\Theta_{\beta}(s)\right]\left[A\alpha^{sn}\Theta_{\alpha}(s)-B\beta^{sn}\Theta_{\beta}(s)\right]\\
&\ \ - \left[A\alpha^{sm}\Theta_{\alpha}(s)-B\beta^{sm}\Theta_{\beta}(s)\right]\left[A\alpha^{s(n+1)}\Theta_{\alpha}(s)-B\beta^{s(n+1)}\Theta_{\beta}(s)\right]\\
&=-AB\alpha^{s(m+1)}\beta^{sn}\Theta_{\alpha}(s)\Theta_{\beta}(s)-AB\alpha^{sn}\beta^{s(m+1)}\Theta_{\beta}(s)\Theta_{\alpha}(s)\\
&\ \ + AB\alpha^{sm}\beta^{s(n+1)}\Theta_{\alpha}(s)\Theta_{\beta}(s)+AB\alpha^{s(n+1)}\beta^{sm}\Theta_{\beta}(s)\Theta_{\alpha}(s)\\
&=AB\left(\alpha^{s}-\beta^{s}\right)\left[\alpha^{sn}\beta^{sm}\Theta_{\beta}(s)\Theta_{\alpha}(s)-\alpha^{sm}\beta^{sn}\Theta_{\alpha}(s)\Theta_{\beta}(s)\right]\\
&=AB(\alpha\beta)^{sm}\left(\alpha^{s}-\beta^{s}\right)\left[\alpha^{s(n-m)}\Theta_{\beta}(s)\Theta_{\alpha}(s)-\beta^{s(n-m)}\Theta_{\alpha}(s)\Theta_{\beta}(s)\right]\\
&=AB(\alpha\beta)^{sm}\left(\alpha^{s}-\beta^{s}\right)\left\lbrace \begin{array}{c} 
\left(\alpha^{s(n-m)}-\beta^{s(n-m)}\right)\Theta_{\alpha}(s)\Theta_{\beta}(s)\\
+ \alpha^{s(n-m)}\left[\Theta_{\beta}(s),\Theta_{\alpha}(s)\right]
\end{array}\right\rbrace ,
\end{align*}
where $\left[\Theta_{\beta}(s),\Theta_{\alpha}(s)\right]=2(\alpha\beta)^{s}(\alpha^{s}-\beta^{s})\left[\lambda_{3}(\alpha\beta)^{s}\textbf{i}-\lambda_{2}(\alpha^{s}+\beta^{s})\textbf{j}+\lambda_{1}\textbf{k}\right]$. Hence we get the result.
\end{proof}

Note that for $m=n$ in Theorem \ref{t6} we have the next identity for the higher order Horadam 3-parameter generalized quaternions.
\begin{corollary}
Let $n$ be an integer such that $n\geq 0$. Then
\begin{align*}
\textnormal{\textbf{Q}}W_{n+1}(s)\textnormal{\textbf{Q}}W_{n}(s)&-\textnormal{\textbf{Q}}W_{n}(s)\textnormal{\textbf{Q}}W_{n+1}(s)\\
&=\frac{AB(\alpha\beta)^{sn}\left(\alpha^{s}-\beta^{s}\right)}{\left[A\alpha^{s}-B\beta^{s}\right]^{2}}\left[\Theta_{\beta}(s),\Theta_{\alpha}(s)\right] ,
\end{align*}
where $\left[\Theta_{\beta}(s),\Theta_{\alpha}(s)\right]=2(\alpha\beta)^{s}(\alpha^{s}-\beta^{s})\left[\lambda_{3}(\alpha\beta)^{s}\textnormal{\textbf{i}}-\lambda_{2}(\alpha^{s}+\beta^{s})\textnormal{\textbf{j}}+\lambda_{1}\textnormal{\textbf{k}}\right]$.
\end{corollary}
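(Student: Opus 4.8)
The plan is to obtain this identity as the special case $m=n$ of Theorem~\ref{t6}. Setting $m=n$ in the d'Ocagne identity, the left-hand side becomes exactly $\textnormal{\textbf{Q}}W_{n+1}(s)\textnormal{\textbf{Q}}W_{n}(s)-\textnormal{\textbf{Q}}W_{n}(s)\textnormal{\textbf{Q}}W_{n+1}(s)$, and the scalar prefactor $\frac{AB(\alpha\beta)^{sm}(\alpha^{s}-\beta^{s})}{[A\alpha^{s}-B\beta^{s}]^{2}}$ becomes $\frac{AB(\alpha\beta)^{sn}(\alpha^{s}-\beta^{s})}{[A\alpha^{s}-B\beta^{s}]^{2}}$. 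It then remains only to simplify the bracketed quaternion factor when $n-m=0$: since $\alpha^{s(n-m)}-\beta^{s(n-m)}=1-1=0$ and $\alpha^{s(n-m)}=1$, the term $(\alpha^{s(n-m)}-\beta^{s(n-m)})\Theta_{\alpha}(s)\Theta_{\beta}(s)$ vanishes while $\alpha^{s(n-m)}[\Theta_{\beta}(s),\Theta_{\alpha}(s)]$ collapses to $[\Theta_{\beta}(s),\Theta_{\alpha}(s)]$, which yields the claimed right-hand side. Substituting the explicit value of the commutator from formula~(\ref{c}) then finishes it.

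Alternatively, one can reprove it directly from the Binet formula of Theorem~\ref{t3} together with~(\ref{c}). Writing $\textnormal{\textbf{Q}}W_{n+1}(s)$ and $\textnormal{\textbf{Q}}W_{n}(s)$ in terms of $\Theta_{\alpha}(s)$ and $\Theta_{\beta}(s)$ and expanding the two products $\textnormal{\textbf{Q}}W_{n+1}(s)\textnormal{\textbf{Q}}W_{n}(s)$ and $\textnormal{\textbf{Q}}W_{n}(s)\textnormal{\textbf{Q}}W_{n+1}(s)$, the pure $\Theta_{\alpha}(s)\Theta_{\alpha}(s)$ and $\Theta_{\beta}(s)\Theta_{\beta}(s)$ contributions cancel in the difference (their coefficients $A^{2}\alpha^{s(2n+1)}$ and $B^{2}\beta^{s(2n+1)}$ are symmetric in the two products), and the surviving mixed terms combine, after dividing by $[A\alpha^{s}-B\beta^{s}]^{2}$, into $AB(\alpha\beta)^{sn}\bigl(\Theta_{\beta}(s)\Theta_{\alpha}(s)-\Theta_{\alpha}(s)\Theta_{\beta}(s)\bigr)=AB(\alpha\beta)^{sn}[\Theta_{\beta}(s),\Theta_{\alpha}(s)]$; one then reads off the extra factor $(\alpha^{s}-\beta^{s})$ and the closed form of the commutator from~(\ref{c}).

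There is essentially no obstacle here: the only point requiring attention is that $\Theta_{\alpha}(s)$ and $\Theta_{\beta}(s)$ do not commute in the 3-parameter generalized quaternion algebra, so the order of the factors must be tracked carefully throughout — this noncommutativity is exactly the reason the answer is a nonzero commutator rather than $0$, in contrast to the scalar case. Everything else is the routine bookkeeping of collecting the coefficients of $\textbf{1}$, $\textbf{i}$, $\textbf{j}$, $\textbf{k}$ via Table~\ref{t1}.
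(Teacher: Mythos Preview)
Your primary argument is correct and coincides with the paper's: the corollary is obtained by specializing Theorem~\ref{t6} to $m=n$, whereupon $\alpha^{s(n-m)}-\beta^{s(n-m)}=0$ kills the $\Theta_{\alpha}(s)\Theta_{\beta}(s)$ term and $\alpha^{s(n-m)}=1$ leaves only the commutator. Your alternative direct computation via the Binet formula is also valid; just note that the extra factor $(\alpha^{s}-\beta^{s})$ already appears when you collect the mixed terms (giving $AB(\alpha\beta)^{sn}(\alpha^{s}-\beta^{s})[\Theta_{\beta}(s),\Theta_{\alpha}(s)]$ before dividing by $[A\alpha^{s}-B\beta^{s}]^{2}$), rather than being read off from~(\ref{c}).
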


\section{Other identities of the higher order Horadam 3-parameter generalized quaternions}

\begin{theorem}\label{t7}
Let $n\geq 0$ be an integer. Then the summation formula for the higher order Horadam 3-parameter generalized quaternions is as follows
\[
\sum_{r=0}^{n}\textnormal{\textbf{Q}}W_{r}(s)=\frac{(\alpha\beta)^{s}\textnormal{\textbf{Q}}W_{n}(s)-\textnormal{\textbf{Q}}W_{n+1}(s)+\sigma(s)\Pi}{(\alpha\beta)^{s}-(\alpha^{s}+\beta^{s})+1}-\Phi,
\]
where 
\begin{align*}
\sigma(s)&=W_{1}(s)+\left(1-(\alpha^{s}+\beta^{s})\right)W_{0}(s),\\
\Pi&=\textnormal{\textbf{1}}+\textnormal{\textbf{i}}+\textnormal{\textbf{j}}+\textnormal{\textbf{k}},\\
\Phi&=W_{0}(s)\textnormal{\textbf{i}}+\left[W_{0}(s)+W_{1}(s)\right]\textnormal{\textbf{j}}+\left[(1-(\alpha\beta)^{s})W_{0}(s)+(1+\alpha^{s}+\beta^{s})W_{1}(s)\right]\textnormal{\textbf{k}},
\end{align*}
and $W_{0}(s)$ and $W_{1}(s)$ defined as in (\ref{e1}).
\end{theorem}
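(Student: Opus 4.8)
The plan is to reduce the whole identity to the scalar summation formula (\ref{id1}) applied coordinate by coordinate. Using the definition (\ref{f1}) and summing over $r=0,\dots,n$, one has
\[
\sum_{r=0}^{n}\textbf{Q}W_{r}(s)=\left(\sum_{r=0}^{n}W_{r}(s)\right)\textbf{1}+\left(\sum_{r=0}^{n}W_{r+1}(s)\right)\textbf{i}+\left(\sum_{r=0}^{n}W_{r+2}(s)\right)\textbf{j}+\left(\sum_{r=0}^{n}W_{r+3}(s)\right)\textbf{k}.
\]
The first step is to absorb the shifts: for $j=1,2,3$ write $\sum_{r=0}^{n}W_{r+j}(s)=\sum_{r=0}^{n+j}W_{r}(s)-\sum_{r=0}^{j-1}W_{r}(s)$, so that every coordinate becomes a partial sum of the form appearing in (\ref{id1}) minus a fixed boundary term depending only on $W_{0}(s),W_{1}(s),W_{2}(s)$.

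Next I would apply (\ref{id1}) to each of the four sums $\sum_{r=0}^{n+j}W_{r}(s)$, $j=0,1,2,3$; they all carry the common denominator $(\alpha\beta)^{s}-(\alpha^{s}+\beta^{s})+1$, and the corresponding numerators are $(\alpha\beta)^{s}W_{n+j}(s)-W_{n+j+1}(s)+\sigma(s)$ with $\sigma(s)=W_{1}(s)+(1-(\alpha^{s}+\beta^{s}))W_{0}(s)$ exactly as in the statement. Re-collecting the result by quaternionic unit and invoking (\ref{f1}) once more, the terms $(\alpha\beta)^{s}W_{n+j}(s)$ assemble into $(\alpha\beta)^{s}\textbf{Q}W_{n}(s)$, the terms $-W_{n+j+1}(s)$ into $-\textbf{Q}W_{n+1}(s)$, and the constants into $\sigma(s)(\textbf{1}+\textbf{i}+\textbf{j}+\textbf{k})=\sigma(s)\Pi$. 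Dividing by the common denominator yields the first summand on the right-hand side of the claimed identity.

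It then remains to check that the accumulated boundary terms, namely $-W_{0}(s)\textbf{i}-(W_{0}(s)+W_{1}(s))\textbf{j}-(W_{0}(s)+W_{1}(s)+W_{2}(s))\textbf{k}$, coincide with $-\Phi$. The $\textbf{i}$- and $\textbf{j}$-coefficients match by inspection, and for the $\textbf{k}$-coefficient I would substitute $W_{2}(s)=(\alpha^{s}+\beta^{s})W_{1}(s)-(\alpha\beta)^{s}W_{0}(s)$ from the recurrence (\ref{e2}); this rewrites $W_{0}(s)+W_{1}(s)+W_{2}(s)$ as $(1-(\alpha\beta)^{s})W_{0}(s)+(1+\alpha^{s}+\beta^{s})W_{1}(s)$, which is precisely the $\textbf{k}$-component of $\Phi$. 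The only genuinely delicate point of the argument is the bookkeeping of the three index shifts together with the final regrouping over the common denominator; once the shift identity $\sum_{r=0}^{n}W_{r+j}(s)=\sum_{r=0}^{n+j}W_{r}(s)-\sum_{r=0}^{j-1}W_{r}(s)$ is in place, the rest is routine algebra using (\ref{id1}) and (\ref{e2}).
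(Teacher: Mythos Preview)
Your proof is correct and follows essentially the same route as the paper: both split the quaternion sum into its four scalar components, re-index each shifted sum so that (\ref{id1}) applies, and then regroup over the common denominator to recover $(\alpha\beta)^{s}\textbf{Q}W_{n}(s)-\textbf{Q}W_{n+1}(s)+\sigma(s)\Pi$ together with the boundary correction $-\Phi$. The only cosmetic difference is that the paper writes the shifted sums directly as $\sum_{r=j}^{n+j}W_{r}(s)$ rather than via your identity $\sum_{r=0}^{n+j}-\sum_{r=0}^{j-1}$, and it leaves the substitution $W_{2}(s)=(\alpha^{s}+\beta^{s})W_{1}(s)-(\alpha\beta)^{s}W_{0}(s)$ implicit, whereas you spell it out.
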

\begin{proof}
Using formulas (\ref{f1}) and (\ref{id1}), we get
\begin{align*}
&\sum_{r=0}^{n}\left[W_{r}(s)\textbf{1}+W_{r+1}(s)\textbf{i}+W_{r+2}(s)\textbf{j}+W_{r+3}(s)\textbf{k}\right]\\
&=\textbf{1}\sum_{r=0}^{n}W_{r}(s)+\textbf{i}\sum_{r=1}^{n+1}W_{r}(s)+\textbf{j}\sum_{r=2}^{n+2}W_{r}(s)+\textbf{k}\sum_{r=3}^{n+3}W_{r}(s)\\
&=\left[\frac{(\alpha\beta)^{s}W_{n}(s)-W_{n+1}(s)+\sigma(s)}{(\alpha\beta)^{s}-(\alpha^{s}+\beta^{s})+1}\right]\textbf{1}\\
&\ \ + \left[\frac{(\alpha\beta)^{s}W_{n+1}(s)-W_{n+2}(s)+\sigma(s)}{(\alpha\beta)^{s}-(\alpha^{s}+\beta^{s})+1}-W_{0}(s)\right]\textbf{i}\\
&\ \ + \left[\frac{(\alpha\beta)^{s}W_{n+2}(s)-W_{n+3}(s)+\sigma(s)}{(\alpha\beta)^{s}-(\alpha^{s}+\beta^{s})+1}-W_{0}(s)-W_{1}(s)\right]\textbf{j}\\
&\ \ + \left[\frac{(\alpha\beta)^{s}W_{n+3}(s)-W_{n+4}(s)+\sigma(s)}{(\alpha\beta)^{s}-(\alpha^{s}+\beta^{s})+1}-W_{0}(s)-W_{1}(s)-W_{2}(s)\right]\textbf{k}\\
&=\frac{(\alpha\beta)^{s}\textnormal{\textbf{Q}}W_{n}(s)-\textnormal{\textbf{Q}}W_{n+1}(s)+\sigma(s)\Pi}{(\alpha\beta)^{s}-(\alpha^{s}+\beta^{s})+1}-\Phi,
\end{align*}
where $\sigma(s)=W_{1}(s)+\left(1-(\alpha^{s}+\beta^{s})\right)W_{0}(s)$, $\Pi=\textbf{1}+\textbf{i}+\textbf{j}+\textbf{k}$ and $\Phi=W_{0}(s)\textnormal{\textbf{i}}+\left[W_{0}(s)+W_{1}(s)\right]\textnormal{\textbf{j}}+\left[(1-(\alpha\beta)^{s})W_{0}(s)+(1+\alpha^{s}+\beta^{s})W_{1}(s)\right]\textnormal{\textbf{k}}$.
\end{proof}

\begin{theorem}\label{t8}
Let $n\geq 0$, $m\geq 0$ be integers such that $n\geq m$. Then
\begin{align*}
&\textnormal{\textbf{Q}}W_{m}(s)\textnormal{\textbf{Q}}U_{n}(s)-\textnormal{\textbf{Q}}U_{m}(s)\textnormal{\textbf{Q}}W_{n}(s)\\
&=\frac{(\alpha-\beta)W_{0}(\alpha\beta)^{sm}}{A\alpha^{s}-B\beta^{s}}\left\lbrace \begin{array}{c} 
U_{n-m}(s)\Theta_{\alpha}(s)\Theta_{\beta}(s)\\
+2\alpha^{s(n-m+1)}\beta^{s}\left[\lambda_{3}(\alpha\beta)^{s}\textnormal{\textbf{i}}-\lambda_{2}(\alpha^{s}+\beta^{s})\textnormal{\textbf{j}}+\lambda_{1}\textnormal{\textbf{k}}\right]
\end{array}\right\rbrace .
\end{align*}
\end{theorem}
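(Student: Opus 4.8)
The plan is to follow the same route as in the proofs of Theorems~\ref{t5} and~\ref{t6}: substitute the Binet representations of Theorem~\ref{t3} and of the Binet formula for $\textbf{Q}U_{n}(s)$, and let the noncommutativity of $\Theta_{\alpha}(s)$ and $\Theta_{\beta}(s)$ generate the commutator term recorded in~\eqref{c}. Writing, and suppressing the argument $s$, $\textbf{Q}W_{k}(s)=[A\alpha^{sk}\Theta_{\alpha}-B\beta^{sk}\Theta_{\beta}]/[A\alpha^{s}-B\beta^{s}]$ and $\textbf{Q}U_{k}(s)=[\alpha^{sk}\Theta_{\alpha}-\beta^{sk}\Theta_{\beta}]/[\alpha^{s}-\beta^{s}]$, I would first multiply the left-hand side by $[A\alpha^{s}-B\beta^{s}][\alpha^{s}-\beta^{s}]$, reducing the claim to an identity for
\[
(A\alpha^{sm}\Theta_{\alpha}-B\beta^{sm}\Theta_{\beta})(\alpha^{sn}\Theta_{\alpha}-\beta^{sn}\Theta_{\beta})-(\alpha^{sm}\Theta_{\alpha}-\beta^{sm}\Theta_{\beta})(A\alpha^{sn}\Theta_{\alpha}-B\beta^{sn}\Theta_{\beta}).
\]

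Next I would expand the two products. The pure terms $\Theta_{\alpha}\Theta_{\alpha}$ and $\Theta_{\beta}\Theta_{\beta}$ occur with identical coefficients in both products and cancel, so only the mixed terms survive, leaving $(B-A)\alpha^{sm}\beta^{sn}\Theta_{\alpha}\Theta_{\beta}+(A-B)\alpha^{sn}\beta^{sm}\Theta_{\beta}\Theta_{\alpha}$. The single arithmetic remark needed here is $A-B=(W_{1}-W_{0}\beta)-(W_{1}-W_{0}\alpha)=W_{0}(\alpha-\beta)$. Pulling the common factor $(\alpha\beta)^{sm}$ out of $\alpha^{sn}\beta^{sm}$ and $\alpha^{sm}\beta^{sn}$ then rewrites the surviving expression as $W_{0}(\alpha-\beta)(\alpha\beta)^{sm}\bigl[\alpha^{s(n-m)}\Theta_{\beta}\Theta_{\alpha}-\beta^{s(n-m)}\Theta_{\alpha}\Theta_{\beta}\bigr]$.

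Finally I would simplify $\alpha^{s(n-m)}\Theta_{\beta}\Theta_{\alpha}-\beta^{s(n-m)}\Theta_{\alpha}\Theta_{\beta}$. Substituting $\Theta_{\beta}\Theta_{\alpha}=\Theta_{\alpha}\Theta_{\beta}+[\Theta_{\beta}(s),\Theta_{\alpha}(s)]$ from~\eqref{c} gives $\bigl(\alpha^{s(n-m)}-\beta^{s(n-m)}\bigr)\Theta_{\alpha}\Theta_{\beta}+\alpha^{s(n-m)}[\Theta_{\beta}(s),\Theta_{\alpha}(s)]$; then $\alpha^{s(n-m)}-\beta^{s(n-m)}=(\alpha^{s}-\beta^{s})U_{n-m}(s)$ by~\eqref{e3}, and inserting $[\Theta_{\beta}(s),\Theta_{\alpha}(s)]=2(\alpha\beta)^{s}(\alpha^{s}-\beta^{s})[\lambda_{3}(\alpha\beta)^{s}\textbf{i}-\lambda_{2}(\alpha^{s}+\beta^{s})\textbf{j}+\lambda_{1}\textbf{k}]$ lets me factor an overall $(\alpha^{s}-\beta^{s})$ out of everything. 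That $(\alpha^{s}-\beta^{s})$ cancels the matching factor in the denominator, division by $A\alpha^{s}-B\beta^{s}$ produces the stated prefactor, and the identity $2\alpha^{s(n-m)}(\alpha\beta)^{s}=2\alpha^{s(n-m+1)}\beta^{s}$ yields the second term inside the braces exactly as claimed. I expect no real obstacle: the computation is short and parallel to the earlier proofs, and the only point that demands care is keeping the products $\Theta_{\alpha}\Theta_{\beta}$ and $\Theta_{\beta}\Theta_{\alpha}$ in the correct order throughout, since these 3-parameter generalized quaternions do not commute.
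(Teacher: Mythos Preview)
Your proposal is correct and follows essentially the same route as the paper's own proof: substitute the Binet formulas for $\textbf{Q}W_{k}(s)$ and $\textbf{Q}U_{k}(s)$, observe that the $\Theta_{\alpha}^{2}$ and $\Theta_{\beta}^{2}$ terms cancel leaving $(A-B)(\alpha\beta)^{sm}\bigl[\alpha^{s(n-m)}\Theta_{\beta}\Theta_{\alpha}-\beta^{s(n-m)}\Theta_{\alpha}\Theta_{\beta}\bigr]$, and then rewrite via the commutator~\eqref{c} together with $A-B=W_{0}(\alpha-\beta)$ and $\alpha^{s(n-m)}-\beta^{s(n-m)}=(\alpha^{s}-\beta^{s})U_{n-m}(s)$. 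The steps, order, and key identities all match the paper.
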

\begin{proof}
The Binet type formulas for the higher order Horadam and generalized Fibonacci 3-parameter generalized quaternions give
\begin{align*}
&\textnormal{\textbf{Q}}W_{m}(s)\textnormal{\textbf{Q}}U_{n}(s)-\textnormal{\textbf{Q}}U_{m}(s)\textnormal{\textbf{Q}}W_{n}(s)\\
&=\left[\frac{A\alpha^{sm}\Theta_{\alpha}(s)-B\beta^{sm}\Theta_{\beta}(s)}{A\alpha^{s}-B\beta^{s}}\right]\left[\frac{\alpha^{sn}\Theta_{\alpha}(s)-\beta^{sn}\Theta_{\beta}(s)}{\alpha^{s}-\beta^{s}}\right]\\
&\ \ - \left[\frac{\alpha^{sm}\Theta_{\alpha}(s)-\beta^{sm}\Theta_{\beta}(s)}{\alpha^{s}-\beta^{s}}\right]\left[\frac{A\alpha^{sn}\Theta_{\alpha}(s)-B\beta^{sn}\Theta_{\beta}(s)}{A\alpha^{s}-B\beta^{s}}\right]\\
&=\frac{(A-B)\left(\alpha^{sn}\beta^{sm}\Theta_{\beta}(s)\Theta_{\alpha}(s)-\alpha^{sm}\beta^{sn}\Theta_{\alpha}(s)\Theta_{\beta}(s)\right)}{\left[\alpha^{s}-\beta^{s}\right]\left[A\alpha^{s}-B\beta^{s}\right]}\\
&=\frac{(A-B)(\alpha\beta)^{sm}\left(\alpha^{s(n-m)}\Theta_{\beta}(s)\Theta_{\alpha}(s)-\beta^{s(n-m)}\Theta_{\alpha}(s)\Theta_{\beta}(s)\right)}{\left[\alpha^{s}-\beta^{s}\right]\left[A\alpha^{s}-B\beta^{s}\right]}.
\end{align*}
By simple calculations, using (\ref{c}), we have
\begin{align*}
&\textnormal{\textbf{Q}}W_{m}(s)\textnormal{\textbf{Q}}U_{n}(s)-\textnormal{\textbf{Q}}U_{m}(s)\textnormal{\textbf{Q}}W_{n}(s)\\
&=\frac{(A-B)(\alpha\beta)^{sm}U_{n-m}(s)\Theta_{\alpha}(s)\Theta_{\beta}(s)}{A\alpha^{s}-B\beta^{s}}\\
&\ \ + \frac{2(A-B)(\alpha\beta)^{s(m+1)}\alpha^{s(n-m)}\left[\lambda_{3}(\alpha\beta)^{s}\textbf{i}-\lambda_{2}(\alpha^{s}+\beta^{s})\textbf{j}+\lambda_{1}\textbf{k}\right]}{A\alpha^{s}-B\beta^{s}}\\
&=\frac{(\alpha-\beta)W_{0}(\alpha\beta)^{sm}}{A\alpha^{s}-B\beta^{s}}\left\lbrace \begin{array}{c} 
U_{n-m}(s)\Theta_{\alpha}(s)\Theta_{\beta}(s)\\
+2\alpha^{s(n-m+1)}\beta^{s}\left[\lambda_{3}(\alpha\beta)^{s}\textbf{i}-\lambda_{2}(\alpha^{s}+\beta^{s})\textbf{j}+\lambda_{1}\textbf{k}\right]
\end{array}\right\rbrace ,
\end{align*}
as desired.
\end{proof}

Replacing $m=n$ in Theorem \ref{t8}, we obtain the following result.
\begin{corollary}
Let $n$ be an integer such that $n\geq 0$. Then
\begin{align*}
&\textnormal{\textbf{Q}}W_{n}(s)\textnormal{\textbf{Q}}U_{n}(s)-\textnormal{\textbf{Q}}U_{n}(s)\textnormal{\textbf{Q}}W_{n}(s)\\
&=\frac{2(\alpha-\beta)W_{0}(\alpha\beta)^{s(n+1)}}{A\alpha^{s}-B\beta^{s}}\left[\lambda_{3}(\alpha\beta)^{s}\textnormal{\textbf{i}}-\lambda_{2}(\alpha^{s}+\beta^{s})\textnormal{\textbf{j}}+\lambda_{1}\textnormal{\textbf{k}}\right].
\end{align*}
\end{corollary}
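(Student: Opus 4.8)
The plan is to read this corollary off from Theorem \ref{t8} by putting $m = n$, after one trivial evaluation. Specializing the displayed identity of Theorem \ref{t8} to $m = n$, the left-hand side becomes $\textbf{Q}W_{n}(s)\textbf{Q}U_{n}(s) - \textbf{Q}U_{n}(s)\textbf{Q}W_{n}(s)$, while on the right the factor $U_{n-m}(s)$ collapses to $U_{0}(s)$. From the defining formula (\ref{e3}), $U_{0}(s) = \frac{1}{\alpha^{s}-\beta^{s}}\bigl(\alpha^{0}-\beta^{0}\bigr) = 0$, so the term $U_{n-m}(s)\Theta_{\alpha}(s)\Theta_{\beta}(s)$ disappears and only the commutator-type term survives. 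It then remains to tidy the exponents: $s(n-m+1) = s$, so $2\alpha^{s(n-m+1)}\beta^{s} = 2(\alpha\beta)^{s}$, and the outer factor $(\alpha\beta)^{sm} = (\alpha\beta)^{sn}$ absorbs this into $2(\alpha\beta)^{s(n+1)}$. Assembling the pieces gives precisely $\frac{2(\alpha-\beta)W_{0}(\alpha\beta)^{s(n+1)}}{A\alpha^{s}-B\beta^{s}}\bigl[\lambda_{3}(\alpha\beta)^{s}\textbf{i} - \lambda_{2}(\alpha^{s}+\beta^{s})\textbf{j} + \lambda_{1}\textbf{k}\bigr]$, as claimed.

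Should a self-contained argument be preferred, one can repeat the mechanism of Theorem \ref{t8}. Substituting the Binet formulas of Theorem \ref{t3} and of its Corollary into $\textbf{Q}W_{n}(s)\textbf{Q}U_{n}(s) - \textbf{Q}U_{n}(s)\textbf{Q}W_{n}(s)$, the $\Theta_{\alpha}(s)\Theta_{\alpha}(s)$ and $\Theta_{\beta}(s)\Theta_{\beta}(s)$ contributions cancel because they appear with the same scalar coefficient in both products; what is left equals $\frac{(A-B)(\alpha\beta)^{sn}\bigl(\Theta_{\beta}(s)\Theta_{\alpha}(s) - \Theta_{\alpha}(s)\Theta_{\beta}(s)\bigr)}{(\alpha^{s}-\beta^{s})(A\alpha^{s}-B\beta^{s})}$. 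Now invoke the commutator identity (\ref{c}) to replace $\Theta_{\beta}(s)\Theta_{\alpha}(s) - \Theta_{\alpha}(s)\Theta_{\beta}(s)$ by $2(\alpha\beta)^{s}(\alpha^{s}-\beta^{s})\bigl[\lambda_{3}(\alpha\beta)^{s}\textbf{i} - \lambda_{2}(\alpha^{s}+\beta^{s})\textbf{j} + \lambda_{1}\textbf{k}\bigr]$, cancel the common factor $\alpha^{s}-\beta^{s}$, and use the relation $A - B = W_{0}(\alpha-\beta)$, which is immediate from $A = W_{1}-W_{0}\beta$ and $B = W_{1}-W_{0}\alpha$; this reproduces the stated formula.

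The main obstacle here is essentially nonexistent. The one point not to overlook is that the \emph{diagonal} term $U_{0}(s)\Theta_{\alpha}(s)\Theta_{\beta}(s)$ vanishes, precisely because $U_{0}(s)=0$; all the rest is scalar exponent bookkeeping together with one application of the already-established commutator identity (\ref{c}). If the direct route is used, the only mild care needed is to track which $\Theta$-products survive the cancellation and to fix the sign in $\Theta_{\beta}(s)\Theta_{\alpha}(s) - \Theta_{\alpha}(s)\Theta_{\beta}(s)$, but that is exactly what (\ref{c}) records.
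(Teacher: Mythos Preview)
Your argument is correct and is exactly the paper's approach: the paper derives this corollary simply by ``Replacing $m=n$ in Theorem~\ref{t8}'', which amounts to your observation that $U_{0}(s)=0$ kills the $\Theta_{\alpha}(s)\Theta_{\beta}(s)$ term and the remaining exponents combine to $(\alpha\beta)^{s(n+1)}$. The self-contained alternative you sketch is a faithful rerun of the Theorem~\ref{t8} computation specialized to $m=n$ and adds nothing new, but it is also correct.
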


\section{Generating functions and matrix generators}

In this section, we will give the generating functions for the higher order Horadam 3-parameter generalized quaternions. Similarly like Horadam and higher order Horadam sequence in (\ref{id2}), these sequences can be considered as the coefficients of the power series expansion of the corresponding generating functions. 

\begin{theorem}
The generating function of the higher order Horadam 3-parameter generalized quaternions has the following form
\[
g\left(\textnormal{\textbf{Q}}W_{n}(s);\lambda\right)=\frac{A\Theta_{\alpha}(s)-B\Theta_{\beta}(s)-\left[A\beta^{s}\Theta_{\alpha}(s)-B\alpha^{s}\Theta_{\beta}(s)\right]\lambda}{\left[A\alpha^{s}-B\beta^{s}\right]\left[1-(\alpha^{s}+\beta^{s})\lambda+(\alpha\beta)^{s}\lambda^{2}\right]}.
\]
\end{theorem}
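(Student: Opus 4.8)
The plan is to read off the generating function directly from the Binet formula established in Theorem \ref{t3}, handling $g\left(\textnormal{\textbf{Q}}W_{n}(s);\lambda\right)=\sum_{n=0}^{\infty}\textnormal{\textbf{Q}}W_{n}(s)\lambda^{n}$ as a power series with coefficients in the 3-parameter generalized quaternion algebra, exactly as identity (\ref{id2}) is treated for the scalar higher order Horadam numbers. The point is that $\Theta_{\alpha}(s)$ and $\Theta_{\beta}(s)$ are fixed quaternions independent of $n$, so the only $n$-dependence in $\textnormal{\textbf{Q}}W_{n}(s)=\frac{A\alpha^{sn}\Theta_{\alpha}(s)-B\beta^{sn}\Theta_{\beta}(s)}{A\alpha^{s}-B\beta^{s}}$ lives in the scalar powers $\alpha^{sn}$ and $\beta^{sn}$, and summation against $\lambda^{n}$ collapses to two scalar geometric series.

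Concretely, I would first pull the constant $\frac{1}{A\alpha^{s}-B\beta^{s}}$ out of the sum and split it into $A\Theta_{\alpha}(s)\sum_{n\geq 0}(\alpha^{s}\lambda)^{n}-B\Theta_{\beta}(s)\sum_{n\geq 0}(\beta^{s}\lambda)^{n}$, which equals $\frac{1}{A\alpha^{s}-B\beta^{s}}\bigl(\frac{A\Theta_{\alpha}(s)}{1-\alpha^{s}\lambda}-\frac{B\Theta_{\beta}(s)}{1-\beta^{s}\lambda}\bigr)$. Next I would put the two fractions over the common denominator $(1-\alpha^{s}\lambda)(1-\beta^{s}\lambda)$ and use $(1-\alpha^{s}\lambda)(1-\beta^{s}\lambda)=1-(\alpha^{s}+\beta^{s})\lambda+(\alpha\beta)^{s}\lambda^{2}$, i.e.\ the characteristic polynomial of the recurrence (\ref{e2}). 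The numerator is then $A\Theta_{\alpha}(s)(1-\beta^{s}\lambda)-B\Theta_{\beta}(s)(1-\alpha^{s}\lambda)$, which I regroup by powers of $\lambda$ as $A\Theta_{\alpha}(s)-B\Theta_{\beta}(s)-\bigl[A\beta^{s}\Theta_{\alpha}(s)-B\alpha^{s}\Theta_{\beta}(s)\bigr]\lambda$, giving the asserted closed form.

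There is no real obstacle: the computation is routine once Theorem \ref{t3} is available. The only points meriting a word of care are that the coefficients sit in a noncommutative algebra, so one must keep $\Theta_{\alpha}(s)$ and $\Theta_{\beta}(s)$ on a fixed side throughout — but since they are only ever multiplied by scalars here, noncommutativity never enters — and that the manipulation is formal (or valid for $|\lambda|$ small enough that $|\alpha^{s}\lambda|<1$ and $|\beta^{s}\lambda|<1$), so no convergence subtlety arises. As an alternative, one could start from the recurrence $\textnormal{\textbf{Q}}W_{n+2}(s)=(\alpha^{s}+\beta^{s})\textnormal{\textbf{Q}}W_{n+1}(s)-(\alpha\beta)^{s}\textnormal{\textbf{Q}}W_{n}(s)$ of Theorem \ref{t1}: multiplying $g$ by $1-(\alpha^{s}+\beta^{s})\lambda+(\alpha\beta)^{s}\lambda^{2}$ telescopes the series to $\textnormal{\textbf{Q}}W_{0}(s)+\bigl[\textnormal{\textbf{Q}}W_{1}(s)-(\alpha^{s}+\beta^{s})\textnormal{\textbf{Q}}W_{0}(s)\bigr]\lambda$, and one identifies these two coefficients with $\frac{A\Theta_{\alpha}(s)-B\Theta_{\beta}(s)}{A\alpha^{s}-B\beta^{s}}$ and $\frac{-[A\beta^{s}\Theta_{\alpha}(s)-B\alpha^{s}\Theta_{\beta}(s)]}{A\alpha^{s}-B\beta^{s}}$ using Theorem \ref{t3} again.
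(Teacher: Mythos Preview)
Your proposal is correct. Your primary route---summing the Binet formula of Theorem \ref{t3} as two scalar geometric series and combining over the common denominator $(1-\alpha^{s}\lambda)(1-\beta^{s}\lambda)$---differs from the paper's argument, which instead follows the standard recurrence method: it multiplies $g$ by $1-(\alpha^{s}+\beta^{s})\lambda+(\alpha\beta)^{s}\lambda^{2}$, invokes Theorem \ref{t1} to telescope down to $\textnormal{\textbf{Q}}W_{0}(s)+\bigl[\textnormal{\textbf{Q}}W_{1}(s)-(\alpha^{s}+\beta^{s})\textnormal{\textbf{Q}}W_{0}(s)\bigr]\lambda$, and only then applies Theorem \ref{t3} to rewrite those two initial quaternions in terms of $\Theta_{\alpha}(s)$ and $\Theta_{\beta}(s)$. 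In other words, the paper's proof is precisely the alternative you sketch at the end. Your geometric-series approach is a shade more direct, since it bypasses the recurrence and the separate evaluation of $\textnormal{\textbf{Q}}W_{0}(s)$ and $\textnormal{\textbf{Q}}W_{1}(s)$; the paper's approach has the minor advantage of making explicit the intermediate form $g=\dfrac{\textnormal{\textbf{Q}}W_{0}(s)+\bigl[\textnormal{\textbf{Q}}W_{1}(s)-(\alpha^{s}+\beta^{s})\textnormal{\textbf{Q}}W_{0}(s)\bigr]\lambda}{1-(\alpha^{s}+\beta^{s})\lambda+(\alpha\beta)^{s}\lambda^{2}}$, which parallels the scalar identity (\ref{id2}).
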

\begin{proof}
Let
\[
g\left(\textnormal{\textbf{Q}}W_{n}(s);\lambda\right)=\textnormal{\textbf{Q}}W_{0}(s)+\textnormal{\textbf{Q}}W_{1}(s)\lambda+\textnormal{\textbf{Q}}W_{2}(s)\lambda^{2}+\cdots 
\]
be the generating function of the higher order Horadam 3-parameter generalized quaternions. Hence
\begin{align*}
(\alpha^{s}+\beta^{s})\lambda g\left(\textnormal{\textbf{Q}}W_{n}(s);\lambda\right)&=(\alpha^{s}+\beta^{s})\textnormal{\textbf{Q}}W_{0}(s)\lambda+(\alpha^{s}+\beta^{s})\textnormal{\textbf{Q}}W_{1}(s)\lambda^{2}+\cdots \\
(\alpha\beta)^{s}\lambda^{2} g\left(\textnormal{\textbf{Q}}W_{n}(s);\lambda\right)&=(\alpha\beta)^{s}\textnormal{\textbf{Q}}W_{0}(s)\lambda^{2}+(\alpha\beta)^{s}\textnormal{\textbf{Q}}W_{1}(s)\lambda^{3}+\cdots .
\end{align*}
Using the recurrence (\ref{e2}), we get
\begin{align*}
\left[1-(\alpha^{s}+\beta^{s})\lambda+(\alpha\beta)^{s}\lambda^{2}\right]&g\left(\textnormal{\textbf{Q}}W_{n}(s);\lambda\right)\\
&=\textnormal{\textbf{Q}}W_{0}(s)+\left[\textnormal{\textbf{Q}}W_{1}(s)-(a^{s}+\beta^{s})\textnormal{\textbf{Q}}W_{0}(s)\right]\lambda .
\end{align*}
Thus
\[
g\left(\textnormal{\textbf{Q}}W_{n}(s);\lambda\right)=\frac{\textnormal{\textbf{Q}}W_{0}(s)+\left[\textnormal{\textbf{Q}}W_{1}(s)-(a^{s}+\beta^{s})\textnormal{\textbf{Q}}W_{0}(s)\right]\lambda}{1-(\alpha^{s}+\beta^{s})\lambda+(\alpha\beta)^{s}\lambda^{2}}.
\]
Using Theorem \ref{t3}, we obtain
\begin{align*}
\textnormal{\textbf{Q}}W_{0}(s)&=\frac{A\Theta_{\alpha}(s)-B\Theta_{\beta}(s)}{A\alpha^{s}-B\beta^{s}},\\
\textnormal{\textbf{Q}}W_{1}(s)&=\frac{A\alpha^{s}\Theta_{\alpha}(s)-B\beta^{s}\Theta_{\beta}(s)}{A\alpha^{s}-B\beta^{s}}.
\end{align*}
Finally, we can write 
\[
g\left(\textnormal{\textbf{Q}}W_{n}(s);\lambda\right)=\frac{A\Theta_{\alpha}(s)-B\Theta_{\beta}(s)-\left[A\beta^{s}\Theta_{\alpha}(s)-B\alpha^{s}\Theta_{\beta}(s)\right]\lambda}{\left[A\alpha^{s}-B\beta^{s}\right]\left[1-(\alpha^{s}+\beta^{s})\lambda+(\alpha\beta)^{s}\lambda^{2}\right]}.
\]
\end{proof}

In the same way we can prove the next results.
\begin{corollary}
The generating function of the higher order generalized Fibonacci 3-parameter generalized quaternions has the following form
\[
g\left(\textnormal{\textbf{Q}}U_{n}(s);\lambda\right)=\frac{\Theta_{\alpha}(s)-\Theta_{\beta}(s)-\left[\beta^{s}\Theta_{\alpha}(s)-\alpha^{s}\Theta_{\beta}(s)\right]\lambda}{\left[\alpha^{s}-\beta^{s}\right]\left[1-(\alpha^{s}+\beta^{s})\lambda+(\alpha\beta)^{s}\lambda^{2}\right]}.
\]
\end{corollary}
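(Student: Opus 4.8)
The plan is to repeat, in structure, the argument used for $g(\textbf{Q}W_n(s);\lambda)$ in the preceding theorem, now with the sequence $\textbf{Q}U_n(s)$ in place of $\textbf{Q}W_n(s)$. (Alternatively one could simply specialize that theorem: taking the initial data $W_0=0$, $W_1=1$ forces $A=B=1$ and turns $\textbf{Q}W_n(s)$ into $\textbf{Q}U_n(s)$, so the formula follows by substitution. I will spell out the direct route, since it matches the paper's style.) First I would set $g(\textbf{Q}U_n(s);\lambda)=\sum_{n\geq 0}\textbf{Q}U_n(s)\lambda^n$ and form the auxiliary series $(\alpha^s+\beta^s)\lambda\,g$ and $(\alpha\beta)^s\lambda^2\,g$ exactly as before. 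Since $U_n(s)$ satisfies the same recurrence \eqref{e2} as $W_n(s)$, the quaternion sequence $\textbf{Q}U_n(s)$ satisfies the same two-term recurrence $\textbf{Q}U_{n+2}(s)=(\alpha^s+\beta^s)\textbf{Q}U_{n+1}(s)-(\alpha\beta)^s\textbf{Q}U_n(s)$ as $\textbf{Q}W_n(s)$ (cf. Theorem~\ref{t1}), proved by the identical coordinatewise computation from \eqref{f2} and \eqref{e2}. Subtracting the two auxiliary series from $g$ and collapsing the resulting telescoping sum via this recurrence leaves only the first two terms, so
\[
\left[1-(\alpha^s+\beta^s)\lambda+(\alpha\beta)^s\lambda^2\right]g(\textbf{Q}U_n(s);\lambda)=\textbf{Q}U_0(s)+\left[\textbf{Q}U_1(s)-(\alpha^s+\beta^s)\textbf{Q}U_0(s)\right]\lambda.
\]

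Next I would evaluate the two quaternions in the numerator using the Binet formula for $\textbf{Q}U_n(s)$ (the corollary following Theorem~\ref{t3}): $\textbf{Q}U_0(s)=\dfrac{\Theta_\alpha(s)-\Theta_\beta(s)}{\alpha^s-\beta^s}$ and $\textbf{Q}U_1(s)=\dfrac{\alpha^s\Theta_\alpha(s)-\beta^s\Theta_\beta(s)}{\alpha^s-\beta^s}$. A one-line simplification then gives $\textbf{Q}U_1(s)-(\alpha^s+\beta^s)\textbf{Q}U_0(s)=\dfrac{\alpha^s\Theta_\beta(s)-\beta^s\Theta_\alpha(s)}{\alpha^s-\beta^s}$, and substituting back and dividing by $1-(\alpha^s+\beta^s)\lambda+(\alpha\beta)^s\lambda^2$ yields the claimed closed form
\[
g(\textbf{Q}U_n(s);\lambda)=\frac{\Theta_\alpha(s)-\Theta_\beta(s)-\left[\beta^s\Theta_\alpha(s)-\alpha^s\Theta_\beta(s)\right]\lambda}{\left[\alpha^s-\beta^s\right]\left[1-(\alpha^s+\beta^s)\lambda+(\alpha\beta)^s\lambda^2\right]}.
\]

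I do not expect any real obstacle here. Because the formal variable $\lambda$ is a scalar, it commutes with every quaternionic coefficient, so the noncommutativity of the $\{\textbf{1},\textbf{i},\textbf{j},\textbf{k}\}$ algebra never enters the manipulation of the generating series; the only nonroutine step is the short algebraic collapse of the numerator, which is mechanical once the Binet values of $\textbf{Q}U_0(s)$ and $\textbf{Q}U_1(s)$ are in hand. If anything, the single point deserving a word of care is the justification that the characteristic recurrence for $\textbf{Q}U_n(s)$ holds for all $n\geq 0$ — that is, that $\textbf{Q}U_n(s)$ inherits \eqref{e2} componentwise — which, however, is immediate and can be quoted from the $\textbf{Q}W_n(s)$ case with $A=B=1$.
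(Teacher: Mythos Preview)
Your proposal is correct and follows exactly the approach the paper indicates: the corollary is stated with the remark ``In the same way we can prove the next results,'' meaning one reruns the generating-function argument of the preceding theorem with $\textnormal{\textbf{Q}}U_{n}(s)$ in place of $\textnormal{\textbf{Q}}W_{n}(s)$ (equivalently, specializes to $W_{0}=0$, $W_{1}=1$, i.e.\ $A=B=1$). Your computation of $\textnormal{\textbf{Q}}U_{0}(s)$, $\textnormal{\textbf{Q}}U_{1}(s)$ and the numerator simplification is accurate.
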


\begin{theorem}
The exponential generating function of the higher order Horadam 3-parameter generalized quaternions is
\[
\sum_{n\geq 0}\textnormal{\textbf{Q}}W_{n}(s)\frac{\lambda^{n}}{n!}=\frac{A\Theta_{\alpha}(s)e^{\alpha^{s}\lambda}-B\Theta_{\beta}(s)e^{\beta^{s}\lambda}}{A\alpha^{s}-B\beta^{s}}.
\]
\end{theorem}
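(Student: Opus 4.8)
The plan is to reduce everything to the Binet formula from Theorem~\ref{t3} and then recognize the resulting scalar series as exponentials. First I would write, by Theorem~\ref{t3},
\[
\textnormal{\textbf{Q}}W_{n}(s)=\frac{A\alpha^{sn}\Theta_{\alpha}(s)-B\beta^{sn}\Theta_{\beta}(s)}{A\alpha^{s}-B\beta^{s}},
\]
and substitute this into the exponential generating series $\sum_{n\geq 0}\textnormal{\textbf{Q}}W_{n}(s)\lambda^{n}/n!$. Since the coefficients $A\alpha^{sn}$, $B\beta^{sn}$, the denominator $A\alpha^{s}-B\beta^{s}$, and $\lambda^{n}/n!$ are all real scalars, they commute with the fixed quaternions $\Theta_{\alpha}(s)$ and $\Theta_{\beta}(s)$, so no care about the noncommutativity of quaternion multiplication is needed: the quaternionic factors $\Theta_{\alpha}(s)$, $\Theta_{\beta}(s)$ can be pulled out of the sum cleanly.

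Next I would split the series into two pieces and factor:
\[
\sum_{n\geq 0}\textnormal{\textbf{Q}}W_{n}(s)\frac{\lambda^{n}}{n!}
=\frac{1}{A\alpha^{s}-B\beta^{s}}\left[A\,\Theta_{\alpha}(s)\sum_{n\geq 0}\frac{(\alpha^{s}\lambda)^{n}}{n!}-B\,\Theta_{\beta}(s)\sum_{n\geq 0}\frac{(\beta^{s}\lambda)^{n}}{n!}\right].
\]
Then I invoke the standard power series $\sum_{n\geq 0}x^{n}/n!=e^{x}$ with $x=\alpha^{s}\lambda$ and $x=\beta^{s}\lambda$ respectively, which converge for all $\lambda$, and conclude
\[
\sum_{n\geq 0}\textnormal{\textbf{Q}}W_{n}(s)\frac{\lambda^{n}}{n!}=\frac{A\Theta_{\alpha}(s)e^{\alpha^{s}\lambda}-B\Theta_{\beta}(s)e^{\beta^{s}\lambda}}{A\alpha^{s}-B\beta^{s}}.
\]

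There is essentially no hard step here; the only thing to be mindful of is that the interchange of summation with the componentwise (quaternionic) structure is legitimate because a quaternion-valued series converges if and only if each of its four real component series converges, and each component series is just a real-linear combination of the two absolutely convergent exponential series. The analogous statement for $\textnormal{\textbf{Q}}U_{n}(s)$ follows by the same computation using the Corollary to Theorem~\ref{t3} (equivalently, by setting $W_{0}=0$, $W_{1}=1$, so that $A=B=1$ up to the normalizing factor), giving $\sum_{n\geq 0}\textnormal{\textbf{Q}}U_{n}(s)\lambda^{n}/n!=\left(\Theta_{\alpha}(s)e^{\alpha^{s}\lambda}-\Theta_{\beta}(s)e^{\beta^{s}\lambda}\right)/(\alpha^{s}-\beta^{s})$.
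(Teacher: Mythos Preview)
Your proof is correct and follows essentially the same approach as the paper: substitute the Binet formula of Theorem~\ref{t3} into the series, split into the $\alpha$- and $\beta$-parts, and identify the scalar exponential series. The additional remarks on commutativity and convergence are sound but go slightly beyond what the paper includes.
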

\begin{proof}
Using Theorem \ref{t3}, we have
\begin{align*}
\sum_{n\geq 0}\textnormal{\textbf{Q}}W_{n}(s)\frac{\lambda^{n}}{n!}&=\sum_{n\geq 0}\left[\frac{A\alpha^{sn}\Theta_{\alpha}(s)-B\beta^{sn}\Theta_{\beta}(s)}{A\alpha^{s}-B\beta^{s}}\right]\frac{\lambda^{n}}{n!}\\
&=\frac{A\Theta_{\alpha}(s)\sum_{n\geq 0}\alpha^{sn}\frac{\lambda^{n}}{n!}-B\Theta_{\beta}(s)\sum_{n\geq 0}\beta^{sn}\frac{\lambda^{n}}{n!}}{A\alpha^{s}-B\beta^{s}}\\
&=\frac{A\Theta_{\alpha}(s)e^{\alpha^{s}\lambda}-B\Theta_{\beta}(s)e^{\beta^{s}\lambda}}{A\alpha^{s}-B\beta^{s}}.
\end{align*}
\end{proof}

Now, we introduce a matrix generator for the higher order Horadam numbers. The higher order Horadam $\textrm{H}(s)$-matrix is defined by
\begin{equation}\label{mat}
\textrm{H}(s)=\left[\begin{array}{rr} \alpha^{s}+\beta^{s}& -(\alpha\beta)^{s}\\
1&0\end{array}\right].
\end{equation}

The following result holds for the higher order Horadam $\textrm{H}(s)$-matrix.
\begin{theorem}\label{t9}
Let $\textnormal{\textrm{H}}(s)$ be the higher order Horadam matrix given in (\ref{mat}). Then for every positive integer $n$, we have
\[
\left[\textnormal{\textrm{H}}(s)\right]^{n}=\left[\begin{array}{rr} U_{n+1}(s)& -(\alpha\beta)^{s}U_{n}(s)\\
U_{n}(s)&-(\alpha\beta)^{s}U_{n-1}(s)\end{array}\right].
\]
\end{theorem}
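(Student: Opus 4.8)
The plan is to argue by induction on $n$, using the recurrence relation (\ref{e2}) specialized to the sequence $U_n(s)$, namely $U_{n+2}(s)=(\alpha^{s}+\beta^{s})U_{n+1}(s)-(\alpha\beta)^{s}U_{n}(s)$, which holds because $U_n(s)$ is the particular case $W_0=0$, $W_1=1$ of the higher order Horadam numbers.

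For the base case $n=1$, I would first record the small values obtained directly from (\ref{e3}): $U_{0}(s)=0$, $U_{1}(s)=1$, and $U_{2}(s)=\frac{\alpha^{2s}-\beta^{2s}}{\alpha^{s}-\beta^{s}}=\alpha^{s}+\beta^{s}$. Substituting these into the claimed formula gives exactly the matrix $\textrm{H}(s)$ displayed in (\ref{mat}), so the case $n=1$ holds.

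For the inductive step, assume the formula holds for some $n\geq 1$ and write $\left[\textrm{H}(s)\right]^{n+1}=\left[\textrm{H}(s)\right]^{n}\,\textrm{H}(s)$. Carrying out the $2\times 2$ matrix product, the $(1,1)$ entry is $(\alpha^{s}+\beta^{s})U_{n+1}(s)-(\alpha\beta)^{s}U_{n}(s)=U_{n+2}(s)$ by the recurrence; the $(2,1)$ entry is $(\alpha^{s}+\beta^{s})U_{n}(s)-(\alpha\beta)^{s}U_{n-1}(s)=U_{n+1}(s)$; and the second column is immediately $-(\alpha\beta)^{s}U_{n+1}(s)$ over $-(\alpha\beta)^{s}U_{n}(s)$. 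This is precisely the asserted formula with $n$ replaced by $n+1$, which closes the induction.

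There is essentially no obstacle here; the only points requiring a little care are checking that the initial values $U_{0}(s),U_{1}(s),U_{2}(s)$ take the stated form so that the base case genuinely matches (\ref{mat}), and noting that the multiplication order is immaterial since $\textrm{H}(s)$ and $\left[\textrm{H}(s)\right]^{n}$ commute. As an alternative one could diagonalize $\textrm{H}(s)$: its characteristic polynomial is exactly $\lambda^{2}-(\alpha^{s}+\beta^{s})\lambda+(\alpha\beta)^{s}$ with roots $\alpha^{s}$ and $\beta^{s}$, so $\textrm{H}(s)=P\,\mathrm{diag}(\alpha^{s},\beta^{s})\,P^{-1}$ for a suitable $P$, and expanding $P\,\mathrm{diag}(\alpha^{sn},\beta^{sn})\,P^{-1}$ while simplifying the entries via the Binet-type formula (\ref{e3}) yields the same result; the inductive proof is the shorter of the two.
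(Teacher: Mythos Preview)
Your proof is correct and follows essentially the same approach as the paper: induction on $n$, with the inductive step carried out by computing $\left[\textrm{H}(s)\right]^{n+1}=\left[\textrm{H}(s)\right]^{n}\textrm{H}(s)$ and applying the recurrence for $U_{n}(s)$. The paper dismisses the base case as ``obvious'' while you spell out $U_{0}(s),U_{1}(s),U_{2}(s)$ explicitly, and your remark on diagonalization is an extra aside not present in the paper, but the core argument is identical.
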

\begin{proof}
If $n=1$ then the result is obvious. Assuming the result holds for $n$, we will prove it for $n+1$. By the induction's hypothesis and relation (\ref{e3}) we get
\begin{align*}
\left[\textnormal{\textrm{H}}(s)\right]^{n+1}&=\left[\textnormal{\textrm{H}}(s)\right]^{n}\textnormal{\textrm{H}}(s)\\
&=\left[\begin{array}{rr} U_{n+1}(s)& -(\alpha\beta)^{s}U_{n}(s)\\
U_{n}(s)&-(\alpha\beta)^{s}U_{n-1}(s)\end{array}\right]\left[\begin{array}{rr} \alpha^{s}+\beta^{s}& -(\alpha\beta)^{s}\\
1&0\end{array}\right]\\
&=\left[\begin{array}{rr} (\alpha^{s}+\beta^{s})U_{n+1}(s)-(\alpha\beta)^{s}U_{n}(s)& -(\alpha\beta)^{s}U_{n+1}(s)\\
(\alpha^{s}+\beta^{s})U_{n}(s)-(\alpha\beta)^{s}U_{n-1}(s)&-(\alpha\beta)^{s}U_{n}(s)\end{array}\right]\\
&=\left[\begin{array}{rr} U_{n+2}(s)& -(\alpha\beta)^{s}U_{n+1}(s)\\
U_{n+1}(s)&-(\alpha\beta)^{s}U_{n}(s)\end{array}\right].
\end{align*}
Thus, the proof is completed.
\end{proof}

In the same way, using Theorem \ref{t1}, one can easily prove the next result.
\begin{theorem}\label{t10}
Let $n\geq 1$ be an integer. Then
\[
\left[\begin{array}{rr} \textnormal{\textbf{Q}}W_{n+1}(s)& -(\alpha\beta)^{s}\textnormal{\textbf{Q}}W_{n}(s)\\
\textnormal{\textbf{Q}}W_{n}(s)&-(\alpha\beta)^{s}\textnormal{\textbf{Q}}W_{n-1}(s)\end{array}\right]=\left[\begin{array}{rr} \textnormal{\textbf{Q}}W_{2}(s)& -(\alpha\beta)^{s}\textnormal{\textbf{Q}}W_{1}(s)\\
\textnormal{\textbf{Q}}W_{1}(s)&-(\alpha\beta)^{s}\textnormal{\textbf{Q}}W_{0}(s)\end{array}\right]\left[\textnormal{\textrm{H}}(s)\right]^{n-1}.
\]
\end{theorem}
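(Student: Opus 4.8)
The plan is to argue by induction on $n$, paralleling the proof of Theorem~\ref{t9}, with the recurrence of Theorem~\ref{t1} now playing the role that (\ref{e3}) played there.

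For the base case $n=1$ one has $\left[\textrm{H}(s)\right]^{0}$ equal to the $2\times 2$ identity matrix, so the right-hand side collapses to
\[
\left[\begin{array}{rr} \textbf{Q}W_{2}(s)& -(\alpha\beta)^{s}\textbf{Q}W_{1}(s)\\ \textbf{Q}W_{1}(s)&-(\alpha\beta)^{s}\textbf{Q}W_{0}(s)\end{array}\right],
\]
which is exactly the left-hand side at $n=1$.

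For the inductive step, assume the asserted equality at $n$ and multiply both sides on the right by $\textrm{H}(s)$. On the right-hand side this yields $\left[\cdots\right]\left[\textrm{H}(s)\right]^{n-1}\textrm{H}(s)=\left[\cdots\right]\left[\textrm{H}(s)\right]^{n}$, which is the desired right-hand side at $n+1$. On the left-hand side I must verify
\[
\left[\begin{array}{rr} \textbf{Q}W_{n+1}(s)& -(\alpha\beta)^{s}\textbf{Q}W_{n}(s)\\ \textbf{Q}W_{n}(s)&-(\alpha\beta)^{s}\textbf{Q}W_{n-1}(s)\end{array}\right]\textrm{H}(s)=\left[\begin{array}{rr} \textbf{Q}W_{n+2}(s)& -(\alpha\beta)^{s}\textbf{Q}W_{n+1}(s)\\ \textbf{Q}W_{n+1}(s)&-(\alpha\beta)^{s}\textbf{Q}W_{n}(s)\end{array}\right].
\]
Carrying out the $2\times 2$ product entrywise, the $(1,2)$ and $(2,2)$ entries are immediate from the shape of $\textrm{H}(s)$, while the $(1,1)$ and $(2,1)$ entries come out as $(\alpha^{s}+\beta^{s})\textbf{Q}W_{n+1}(s)-(\alpha\beta)^{s}\textbf{Q}W_{n}(s)$ and $(\alpha^{s}+\beta^{s})\textbf{Q}W_{n}(s)-(\alpha\beta)^{s}\textbf{Q}W_{n-1}(s)$; by Theorem~\ref{t1} these equal $\textbf{Q}W_{n+2}(s)$ and $\textbf{Q}W_{n+1}(s)$ respectively. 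This closes the induction.

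The only point demanding care — rather than a genuine obstacle — is that the matrix entries $\textbf{Q}W_{k}(s)$ lie in the noncommutative algebra of $3$-parameter generalized quaternions, so factors in a matrix product cannot in general be reordered. Here this causes no difficulty: the factor $\textrm{H}(s)$ has only real scalar entries ($\alpha^{s}+\beta^{s}$, $-(\alpha\beta)^{s}$, $1$, $0$), and real scalars are central in the algebra, so the product $\left[\cdots\right]\textrm{H}(s)$ is unambiguous and the entrywise computation above is legitimate. No input beyond Theorem~\ref{t1} and the definition (\ref{mat}) of $\textrm{H}(s)$ is required.
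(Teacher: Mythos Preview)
Your proof is correct and follows exactly the approach the paper indicates: an induction paralleling Theorem~\ref{t9}, with Theorem~\ref{t1} supplying the recurrence in place of (\ref{e3}). The remark on noncommutativity is a nice clarification but, as you note, not an obstacle since $\textrm{H}(s)$ has central (real) entries.
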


We now present some results that can be derived from Theorem \ref{t10}.
\begin{corollary}
Let $n\geq 0$, $m\geq 1$ be integers. Then
\[
\textnormal{\textbf{Q}}W_{n+m+1}(s)=\textnormal{\textbf{Q}}W_{2}(s)U_{n+m}(s)-(\alpha\beta)^{s}\textnormal{\textbf{Q}}W_{1}(s)U_{n+m-1}(s).
\]
\end{corollary}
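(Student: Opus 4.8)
The plan is to read off the $(1,1)$-entry of the matrix identity in Theorem \ref{t10} after shifting the index, and then substitute the explicit power of the $\mathrm{H}(s)$-matrix from Theorem \ref{t9}. No new computation beyond matrix bookkeeping is required.

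First I would apply Theorem \ref{t10} with $n$ replaced by $n+m$ (this is legitimate since $n\geq 0$ and $m\geq 1$ give $n+m\geq 1$), which yields
\[
\left[\begin{array}{rr} \textnormal{\textbf{Q}}W_{n+m+1}(s)& -(\alpha\beta)^{s}\textnormal{\textbf{Q}}W_{n+m}(s)\\ \textnormal{\textbf{Q}}W_{n+m}(s)&-(\alpha\beta)^{s}\textnormal{\textbf{Q}}W_{n+m-1}(s)\end{array}\right]=\left[\begin{array}{rr} \textnormal{\textbf{Q}}W_{2}(s)& -(\alpha\beta)^{s}\textnormal{\textbf{Q}}W_{1}(s)\\ \textnormal{\textbf{Q}}W_{1}(s)&-(\alpha\beta)^{s}\textnormal{\textbf{Q}}W_{0}(s)\end{array}\right]\left[\textnormal{\textrm{H}}(s)\right]^{n+m-1}.
\]
Then I would invoke Theorem \ref{t9} with exponent $n+m-1$ to replace $\left[\textnormal{\textrm{H}}(s)\right]^{n+m-1}$ by the matrix with first column $\left(U_{n+m}(s),\,U_{n+m-1}(s)\right)^{T}$ and second column $\left(-(\alpha\beta)^{s}U_{n+m-1}(s),\,-(\alpha\beta)^{s}U_{n+m-2}(s)\right)^{T}$.

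Finally I would compare the $(1,1)$-entries of both sides. On the left this entry is $\textnormal{\textbf{Q}}W_{n+m+1}(s)$; on the right, multiplying the first row $\left(\textnormal{\textbf{Q}}W_{2}(s),\,-(\alpha\beta)^{s}\textnormal{\textbf{Q}}W_{1}(s)\right)$ against the first column of $\left[\textnormal{\textrm{H}}(s)\right]^{n+m-1}$ gives $\textnormal{\textbf{Q}}W_{2}(s)U_{n+m}(s)-(\alpha\beta)^{s}\textnormal{\textbf{Q}}W_{1}(s)U_{n+m-1}(s)$, which is exactly the claimed formula. The only point needing a word of care is that the entries of the quaternion matrix are multiplied by the scalar entries of $\left[\textnormal{\textrm{H}}(s)\right]^{n+m-1}$; since real scalars commute with every $3$-parameter generalized quaternion, the entrywise product is unambiguous and the matrix multiplication is valid, so this is not a genuine obstacle. (As a remark, one could equally well read off the $(2,1)$-entry to recover the identity of Theorem \ref{t10} itself, and the $(1,2)$ or $(2,2)$ entries give the analogous identities with $\textnormal{\textbf{Q}}W_{0}(s)$ in place of $\textnormal{\textbf{Q}}W_{2}(s)$.)
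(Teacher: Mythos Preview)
Your argument is correct and is precisely the derivation the paper has in mind: the corollary is stated without proof as an immediate consequence of Theorem~\ref{t10}, and your reading of the $(1,1)$-entry after substituting Theorem~\ref{t9} for $[\mathrm{H}(s)]^{n+m-1}$ is exactly that. The only cosmetic point is the boundary case $n+m=1$, where Theorem~\ref{t9} is not literally applicable to the exponent $0$; but there $[\mathrm{H}(s)]^{0}=I$ and $U_{0}(s)=0$, $U_{1}(s)=1$, so your description of the first column still holds and the identity reduces to $\textnormal{\textbf{Q}}W_{2}(s)=\textnormal{\textbf{Q}}W_{2}(s)$.
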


\begin{corollary}
Let $n\geq 1$ be an integer. Then
\[
\textnormal{\textbf{Q}}W_{2n}(s)=\textnormal{\textbf{Q}}W_{1}(s)U_{2n}(s)-(\alpha\beta)^{s}\textnormal{\textbf{Q}}W_{0}(s)U_{2n-1}(s).
\]
\end{corollary}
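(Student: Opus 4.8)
The plan is to obtain the identity simply by reading off a single matrix entry from Theorem~\ref{t10}, after inserting the explicit form of the powers of the Horadam matrix supplied by Theorem~\ref{t9}. First I would specialize Theorem~\ref{t10} to the index $2n$ in place of $n$; this is legitimate since $n\geq 1$ gives $2n\geq 1$. This yields
\[
\left[\begin{array}{rr} \textbf{Q}W_{2n+1}(s)& -(\alpha\beta)^{s}\textbf{Q}W_{2n}(s)\\ \textbf{Q}W_{2n}(s)&-(\alpha\beta)^{s}\textbf{Q}W_{2n-1}(s)\end{array}\right]=\left[\begin{array}{rr} \textbf{Q}W_{2}(s)& -(\alpha\beta)^{s}\textbf{Q}W_{1}(s)\\ \textbf{Q}W_{1}(s)&-(\alpha\beta)^{s}\textbf{Q}W_{0}(s)\end{array}\right]\left[\textrm{H}(s)\right]^{2n-1}.
\]

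Next I would substitute the closed form of $\left[\textrm{H}(s)\right]^{2n-1}$ from Theorem~\ref{t9} (with exponent $2n-1\geq 1$, again because $n\geq 1$), so that the first column of $\left[\textrm{H}(s)\right]^{2n-1}$ is $\left(U_{2n}(s),\,U_{2n-1}(s)\right)^{T}$. Then I would carry out the $2\times 2$ matrix multiplication on the right-hand side and compare the lower-left $(2,1)$ entries of both sides. On the left the $(2,1)$ entry is exactly $\textbf{Q}W_{2n}(s)$; on the right it is the pairing of the second row $\left(\textbf{Q}W_{1}(s),\,-(\alpha\beta)^{s}\textbf{Q}W_{0}(s)\right)$ of the first factor with that first column, namely $\textbf{Q}W_{1}(s)U_{2n}(s)-(\alpha\beta)^{s}\textbf{Q}W_{0}(s)U_{2n-1}(s)$. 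Equating the two gives the claim.

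There is essentially no genuine obstacle here: the whole argument is bookkeeping for a $2\times 2$ matrix product, and the only point needing a moment's care is that Theorem~\ref{t10} (at index $2n$) and Theorem~\ref{t9} (at exponent $2n-1$) are being invoked within their stated ranges, which the hypothesis $n\geq 1$ guarantees. In fact the same entry comparison yields $\textbf{Q}W_{N}(s)=\textbf{Q}W_{1}(s)U_{N}(s)-(\alpha\beta)^{s}\textbf{Q}W_{0}(s)U_{N-1}(s)$ for every integer $N\geq 1$; setting $N=2n$ recovers the stated corollary, so restricting to even indices is merely the form in which it is recorded.
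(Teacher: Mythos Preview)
Your proposal is correct and follows exactly the route the paper intends: the corollary is stated without proof as a direct consequence of Theorem~\ref{t10} (together with Theorem~\ref{t9}), and reading off the $(2,1)$ entry after substituting $[\textrm{H}(s)]^{2n-1}$ is precisely that derivation. Your observation that the same argument yields the identity for every $N\geq 1$ is also accurate.
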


\section{Conclusion}
In the present paper, firstly, the higher order Horadam 3-parameter generalized quaternions are defined, and several properties involving these 3-parameter generalized quaternions are studied. These newly defined higher order Horadam numbers are the generalized form of the previously introduced higher order Fibonacci numbers given in \cite{Ki1,Ki2,Ki3}. After that, as an extension of the higher order Horadam quaternions, the higher order Horadam generalized quaternions are defined, and many formulas and identities for these quaternions are obtained. The higher order Horadam 3-parameter generalized quaternions are the generalized form of the previously introduced higher order Horadam numbers given in \cite{Kul}.

\end{document}